\documentclass[a4paper, 11pt]{amsart}

\usepackage[T2A,T1]{fontenc}  
\usepackage[utf8]{inputenc}

\usepackage[russian, french, british]{babel}

\usepackage{amssymb, amsmath}
\usepackage{mathtools}
\usepackage{bm}

\usepackage[cal=boondoxo]{mathalfa}

\usepackage{comment}
\usepackage{url}

\usepackage[hidelinks]{hyperref}

\newtheorem{THEOREM}{Theorem}[section]

\newtheorem{theorem}[THEOREM]{Theorem}
\newtheorem{dtheorem}[THEOREM]{Attempted Theorem}
\newtheorem{lemma}[THEOREM]{Lemma}
\theoremstyle{definition}
\newtheorem{definition}[THEOREM]{Definition}

\newtheorem{corollary}[THEOREM]{Corollary}
\newtheorem{example}[THEOREM]{Example}
\newtheorem{fact}[THEOREM]{Fact}
\newtheorem{remark}[THEOREM]{Remark}
\newtheorem{Observation}[THEOREM]{Observation}
\newenvironment{observation}{\begin{Observation}}
{\end{Observation}}
\newtheorem{Notation}[THEOREM]{Notation}
\newenvironment{notation}{\begin{Notation}}
{\end{Notation}}

\newcommand{\forces}{\Vdash}
\newcommand{\dom}{\text{dom}}

\newcommand{\supt}{\text{supt}}

\newcommand{\kpo}{\kappa\mathbf{PO}}
\newcommand{\isz}{\mathbf{IS}\zeta}

\newcount\skewfactor
\def\mathunderaccent#1#2 {\let\theaccent#1\skewfactor#2
\mathpalette\putaccentunder}
\def\putaccentunder#1#2{\oalign{$#1#2$\crcr\hidewidth
\vbox to.2ex{\hbox{$#1\skew\skewfactor\theaccent{}$}\vss}\hidewidth}}
\def\name{\mathunderaccent\tilde-3 }

\newcommand{\rest}{\upharpoonright}

\newcommand{\cf}{\text{cf}}

\newcommand{\eop}{\bigstar}
\newcommand{\DD}{\mathcal D}

\newcommand{\FF}{\mathcal F}
\newcommand{\HH}{\mathcal H}

\newcommand{\eod}{\scriptstyle\blacksquare}

\title{Iterating generalised perfect set forcing along well-founded orders}

\author{Mirna D{\v z}amonja, by-name Logique Consult, Paris}

\date{}

\begin{document}

\maketitle

\begin{abstract} The technique of geometric forcing iteration was developed by Kanovei
\cite{zbMATH01335192} and used to prove that the perfect set forcing can be iterated with countable supports along any partial order, while preserving $\aleph_1$. In \cite{Property-B} we considered a generalised perfect set forcing with respect to a filter on a cardinal $\kappa$ satisfying 
$\kappa^{<\kappa}=\kappa$, which we denoted ${\mathbb P} (\mathcal F)$, and we proved that its iteration with supports of size $\le\kappa$ along any ordinal 
preserves cardinals up to and including $\kappa^+$.

We show that there is a version of the geometric iteration technique that applies to ${\mathbb P} (\mathcal F)$ and yields that for $\kappa$ satisfying 
$\kappa^{<\kappa}=\kappa$ and for appropriate filters $\mathcal F$, the forcing  ${\mathbb P} (\FF)$ can be iterated with supports of size $\le\kappa$ along any well-founded partial order and
preserve cardinals up to and including $\kappa^+$. As an application of our technique we obtain that common notions of arboreal forcings on $\omega$ can be iterated with countable supports along any well-founded partial order and that such iterations preserve $\aleph_1$.

\end{abstract}

\section{Introduction and Motivation} Perfect set forcing is a forcing notion whose conditions are perfect subtrees of $2^{<\omega}$ (or, more generally, of $\omega^{<\omega}$). A perfect tree is a nonempty subtree of $2^{<\omega}$ with no terminal nodes; forcing with such a tree adds a new real (an infinite branch through the tree) that is ``generic'' in a strong topological sense. Similarly to Cohen forcing, which adds a very ``free'' generic real, or random forcing, which adds a measure-theoretically generic real, perfect set forcing adds a special real: one of minimal Turing degree. This forcing notion is closely related to Albert Muchnik's solution to the Post problem in \cite{zbMATH03117565}. It was formulated as a forcing notion by Gerald Sacks in \cite{Sacks}.

For a set theorist, the importance of perfect set forcing lies in its ability to control the structure of the continuum while preserving many regularity properties. It keeps the cardinal invariants of the continuum low, while increasing the $\aleph$ corresponding to the size of the continuum. 
Perfect set forcing plays a central role in minimality results for Turing degrees and degrees of constructibility. Iterations or products of perfect set forcing are often used to obtain consistency results about the size of the continuum, the existence of certain pathological sets, or the behaviour of definable sets of reals, all while keeping the forcing ``mild'' enough not to destroy global features of the universe. Over a model of CH, it can be iterated with countable support $\omega_2$ many times while preserving both $\aleph_1$ and $\aleph_2$. Various combinatorial properties of the resulting model are known, see \cite{zbMATH03664937} and \cite{GeshckeQuickert}.

One would think that the idea of perfect set forcing can be naturally generalised to higher cardinals, giving rise to the notion of \emph{perfect set forcing at $\kappa$}, or, more generally, \emph{$\kappa$-perfect forcing}. This actually turns out to be quite difficult, with several mistaken attempts available in the literature. The reason for this is the emerging evidence of the impossibility of generalising to uncountable cardinals the rich combinatorial and descriptive set theory that we know about $\omega$. The papers \cite{Inamdar-Rinot-nonstructure-aleph2} by Tanmay Inamdar and Assaf Rinot and \cite{FriedmanHyttinenKulikov2014} by Sy-David Friedman, Tapani Hyttinen and Vadim Kulikov describe some of these difficulties, giving not only heuristic evidence but actual proofs of the impossibility of such a generalisation. On the other hand, the very natural and elegant programme introduced by Itay Neeman in \cite{Neeman}, which uses forcing with two types of models as side conditions to redo the classical iterations, opens a door to generalisations to higher cardinals. There has therefore been a tension between the possibilities that Neeman's method offer and the discovery of ZFC limitations of what we can expect as consistency results at cardinals larger than $\aleph_1$. 

A positive result in the direction of generalising the combinatorial forcing results from $\omega$ to uncountable cardinals $\kappa$ was obtained in \cite{Property-B}. There we found the right uncountable analogue of the perfect set forcing which we named
${\mathbb P} (\mathcal F)$ and showed that it can be iterated along the usual well-ordered iterations with supports of size $\le\kappa$, while preserving all the cardinals up and including $\kappa^+$. This presupposes that $\kappa=\kappa^{<\kappa}$ and that $\FF$ is a conveniently chosen filter on $\kappa$. For the purposes of this paper we can concentrate on $\FF^\ast_\kappa$, which is the filter of co-bounded subsets of $\kappa$. Roughly speaking, the forcing 
${\mathbb P} (\mathcal F)$ consists of perfect subtrees of ${}^{<\kappa}\kappa$ of height $\kappa$ where every splitting is into $\FF$ many points, and there are club many splittings along every branch (see Definition \ref{gSacks:def} for the precise definition). 
In \cite{Property-B} we showed that the forcing ${\mathbb P} (\mathcal F)$ has satisfactory fusion properties and preserves cardinals 
$\le\kappa^+$ when iterated with supports $\le\kappa$ along an ordinal. Given the difficulties and known bounds to the possible extensions of the proper forcing iteration technique which follow from \cite{Inamdar-Rinot-nonstructure-aleph2}, this shows that the generalised perfect set forcing is rather special 
among the possible generalisations of many forcings known that introduce new subsets to $\omega$.

This paper extends the iteration results from  \cite{Property-B} to the class of well-founded partial orders
instead of ordinals. That is, we can iterate ${\mathbb P} (\mathcal F^\ast_\kappa)$ with supports of size $\le\kappa$ along such partial orders and still preserve cardinals $\le\kappa^+$. A very surprising result about perfect set forcing was obtained by Vladimir Kanovei in \cite{zbMATH01335192}, showing that the perfect set forcing can be iterated with countable supports along {\em any} partial order, while preserving $\aleph_1$. This theorem
builds on Kanovei's earlier work in \cite{zbMATH03685477}  (see \cite{zbMATH07077420} for a modern presentation in English) and is proved by introducing 
a novel technique called {\em geometric iteration}. The technique is very special to the perfect set forcing and it uses properties of the descriptive set theory of the reals. Here we have succeeded in obtaining an analogous result for the forcing ${\mathbb P} (\mathcal F^\ast_\kappa)$, albeit having to restrict to well-founded partial orders. Our technique is necessarily rather different from that of \cite{zbMATH01335192}, since the descriptive set theory is no longer available. We use instead the ranks of well-founded orders. As to the class of well-founded orders, it includes various trees and more complicated orders. Hence the results here constitute a true generalisation of what was obtained in \cite{Property-B}. The technique also gives a generalisation of the familiar theorem
by James Baumgartner \cite[Th 7.1]{Ba}, who proved that Axiom A forcing iterated with countable supports along any ordinal, preserves $\aleph_1$
\footnote{In fact, some preservation assumptions must be made in this general theorem. For example, if we force with a forcing defined using a parameter, say a filter, we need to assume that the filter remains the same after forcing with Axiom A forcing. We call this property {\em absolutely definable}, see Definition \ref{not:F}.}. We prove in Theorem \ref{cor:aleph_0} that the same is true if such an iteration is made along any well-founded partial order.

In addition to this, we discuss the generic objects added by ${\mathbb P} (\mathcal F^\ast_\kappa)$ and show that they include a function in 
${}^\kappa\kappa$ which dominates on a club all such functions from the ground model. Together with the main iteration result, this gives another a proof of a result claimed by James Cummings and Saharon Shelah \cite[Th. 1]{zbMATH00819406}. It states that it is consistent when $\kappa>\aleph_0$ that the structure ${}^\kappa\kappa$ modulo the order of the domination on a club contains a set isomorphic to any well-founded order given in advance.
This resembles the well known property of dominating reals obtained by Hechler forcing. Our method reproves that result as well.

Finally, we discuss the preservation of $\kappa^{++}$ and show that there are difficulties in that. They stem from the situation which (in spite of much published work on the subject) remains unclear even in much simpler contexts, such as having a direct proof that iterating $\omega_2$ Laver reals with countable supports over a model of CH does not collapse $\aleph_2$.

The main technical difficulty of the paper is that the geometric iteration of perfect set forcing is not really an iteration but a product organised along a possibly non-linear partial order. This works well for the product of perfect set forcing since a product with countable support of such forcings preserves cardinals. In our case, this is no longer true. In fact, in \cite[Th 5.6]{Property-B} we showed that 
if $2^\kappa=\kappa^+$, then the product of $\kappa$ many copies of ${\mathbb P} (\mathcal F)$ collapses $\kappa^+$, and this is true even for 
$\kappa=\aleph_0$. Our approach therefore was to give up the idea of the products but to use a strong property of well-ordered iterations of 
${\mathbb P} (\mathcal F)$  called {\em amalgamation property}. We proved \cite[Th 8.1]{Property-B} that this property holds for 
the well-ordered iteration of ${\mathbb P} (\mathcal F)$. In this paper we show that it holds for the well-founded iterations too, and we use that to obtain the general iteration theorem.

The paper is organised as follows. \S\ref{sec:definitions} gives basic definitions and facts. \S\ref{sec:difficcult} explains why geometric iterations cannot be used in this case and describes the main difficulties that need to be resolved. \S \ref{sec: ourdef} gives the definitions of the concepts needed from 
\cite{Property-B}. \S\ref{sec:it-wf} develops the definition of an iteration along a well-founded order. \S \ref{sec:main} proves the main result of the paper,
which is that, for appropriate filters $\mathcal F$, we can iterate ${\mathbb P}(\FF)$ along any well-founded order and preserve cardinals up to and including
$\kappa^+$. \S\ref{sec:applications} shows what the generic object of ${\mathbb P}(\FF)$ is. Finally, \S \ref{sec:double+} discusses the difficulties of proving the preservation of
$\kappa^{++}$ even when starting from a model of GCH, where our research is inconclusive.  \S\ref{conclusions} recalls the conclusions of the paper and gives indications of future research directions.
 
\section{Basic Definitions}\label{sec:definitions} Throughout, $\kappa$ is an infinite cardinal satisfying $\kappa^{<\kappa}=\kappa$.
We recall the definition of ${\mathbb P} (\mathcal F)$ from \cite[Def 4.1]{Property-B}. As motivation, we mention that it is a generalisation from $\omega$ of the Grigorieff forcing on trees, which itself is a generalisation of the perfect set forcing. To generalise perfect set forcing from $\omega$ to uncountable cardinals, it turns out that one has to pass through the Grigorieff forcing rather than working with the perfect set forcing directly. The reasons for this are 
explained in \cite[\S\S4 and 10]{Property-B}\footnote{The paper \cite{Property-B} was concerned both with mathematics and with the history of the concepts discussed. To explain that better, we had split the definition of  $\mathbb P(\FF)$ into something that looked like the direct analogue \cite{zbMATH03708389} for 
$\kappa$-perfect trees, and then we showed that without the requirement of closure, the concept does not yield a useful forcing notion. Therefore we incorporated the closure in the definition of our forcing. There is no longer any reason to do such a split in this paper, hence we adopt the more natural notation. This is also the approach of \cite{zbMATH05121369} and of \cite{FriedmanGitman:ModelOfACNotDCInaccessible}, both of whom noticed the mistake in \cite{zbMATH03708389} without saying it specifically. In addition, \cite{FriedmanGitman:ModelOfACNotDCInaccessible} provided a complete closure argument for the relevant forcing. Moving the closure up or down in the definitions does not affect any of the results, obviously.}.

\begin{definition}\label{gSacks:def}
(1) Let $\FF$ be a $(<\kappa)$-complete filter on $\kappa$. We say that a subtree $p\subseteq {}^{{<\kappa}}\kappa$ is  {\em $\FF$-perfect} if 
\begin{itemize}
\item $\sup\{\dom(s):\,s\in p\}=\kappa$, 
\item for any $s\in p$, there exists
$t\supseteq s$ such that $\{\alpha<\kappa:\, t\frown\alpha\in p\} \in \FF$ (we say that such a $t$  is $\FF$-{\em splitting}) and
\item If $\delta<\kappa$ is a limit ordinal and $s\in {}^{\delta} \kappa$ is such that for all $\alpha<\delta$ we have $s\rest\alpha\in p$, then $s\in p$
(we call this property the {\em closure of} $p$).
\end{itemize}

For $s\in p$ we define its {\em splitting history} as
\[
{\rm deg}_p (s)=\{i < \dom(s):\,(\exists t\in p)\,[t\rest i =s\rest i \mbox{ and }t(i)\neq s(i)]\}.
\]

{\noindent (2)} The $\FF$-perfect tree forcing $\mathbb P(\FF)$ is defined as follows:

The conditions in $\mathbb P$ are $\FF$-perfect subtrees $p$ of ${}^{{<\kappa}}\kappa$, satisfying the following additional conditions:
\begin{description}
\item [{\rm (o)}] For every $s\in p$, either $| \{\alpha<\kappa:\,s\frown \alpha\in p\} | = 1$, or $\{\alpha<\kappa:\,s\frown \alpha\in p\}\in \FF$.
\item [{\rm (i) [Closure of splitting for every node]}] For every $s\in p$, the set ${\rm deg}_p (s)$ is a closed subset of $\dom(s)$.
\end{description}
The order is defined by $p\le q$ iff $p\supseteq q$. We use the convention that $p\le q$ means that $q$ is the stronger condition in terms of forcing.

The {\em stem} of $p$,
denoted $s[p]$, is defined by
\[
s[p]=\bigcup \{s\in p:\,\mbox{no }t\subset s \mbox{ is splitting and }s \mbox{ is splitting}\}.
\]
(Note that this is well-defined by (o) and is a uniquely defined element in $p$).

\smallskip

{\noindent (3)} For $\varepsilon<\kappa$ the order $\le_\varepsilon$ is defined by letting 
$p\le_\varepsilon q$ if 
\begin{itemize}
\item $p\le q$,
\item for all $s\in p$ with ${\rm otp}({\rm deg}_p(s))< \varepsilon$ we have $s\in q$.
\end{itemize}

\smallskip

{\noindent (4)} For $p$ as above, we let $[p]$ be the set of $\kappa$-branches of $p$, namely
\[
[p]=\{f\in {}^\kappa\kappa:\,(\forall \alpha<\kappa) f\rest\alpha\in p\}. \eod_{\mbox{\tiny{Def.}} \ref{gSacks:def}}
\]
\end{definition}

Note that $\mathbb P(\FF)$ is indeed a partial order with the least element ${}^{{<\kappa}}\kappa$ and that each $\le_\varepsilon$ is
a partial order.

\subsection{Geometric Iterations} We recall the relevant definitions from \cite[\S1]{zbMATH01335192} and adapt them to our setting, using the notation of that paper.

\begin{notation}\label{def:letters} Letters $\zeta,\xi$ and $\eta$ stand for partial orders, where an order relation is specified, and $i,j$ stand for individual elements of partial orders. The set $\kpo$ is the set of all (isomorphism types of) partial orders of size $\le\kappa$. 

For a partial order $(\zeta,<)$  and $i\in \zeta$, we define $[<i], [\le i], [>i], [\ge i]$ in the natural way. For example $[<i]=\{j\in \zeta:\,j<i\}$.
\end{notation}

\begin{definition}\label{def:initial-segment} An {\em initial segment} of a partial order $(\zeta,<)$ is a downward-closed subset, that is a set $\xi\subseteq \zeta$ such that for all $j\in \xi$ and $i<j$ we have $i\in \xi$. We assume that the initial segments are ordered by the induced order. We let $\isz$ be the set of all
initial segments of the order $(\zeta,<)$.

We let $\mathcal K$ stand for ${}^\kappa\kappa$ and $\mathcal k$ for ${}^{<\kappa}\kappa$. 
$\eod_{\mbox{\tiny{Def.}} \ref{def:initial-segment}}$
\end{definition}

We recall that the main result of \cite{zbMATH01335192} is the definition of an iteration of the perfect set forcing with countable supports along any partial order and the theorem \cite[Th. 1]{zbMATH01335192} stating that such an iteration (or a ``long product'' as it is appropriately called in \cite{zbMATH01335192}) preserves  $\aleph_1$. Since Kanovei's discovery of it, his iteration technique has also been called {\em geometric forcing}. This is the terminology we adopt here. In \S\ref{sec:difficcult} we explain why it is unlikely that a result as general as the results mentioned can be obtained for $\kappa$
uncountable, and why the geometric iteration technique does not straightforwardly generalise to this case.

\section{The difficulties of the generalisation to regular $\kappa>\aleph_0$}\label{sec:difficcult} The main difficulty of the generalisation is the lack of a convenient topological structure. 
We may equip $\mathcal K$ with a topology generated by the initial segments: a basic open set is a set of the form $\{\rho: \, s\subseteq \rho\}$ for $s\in \mathcal k$. In particular, this is a $(<\kappa)$-box topology, rather than the ordinary product topology. The space $\mathcal K$ is neither metrisable nor compact in this topology, or in any other topology that respects the order on $\kappa$ sufficiently to let us identify perfect subsets of $\mathcal K$ as branches of $\mathcal k$.

For $(\zeta, <)\in \kpo$ we may define the topological space $\mathcal K^\zeta$ as follows. Its underlying set is the Cartesian product (we may also work with the lexicographic product, but it turns out to be irrelevant) of copies of $\mathcal K$ along $\zeta$, so the set $\Pi_{i\in \zeta} \mathcal K$ ordered by the ordinary coordinatwise order. The set
${\mathcal K}^\zeta$ can be equipped with various topologies, for example the ordinary Tychonoff product \cite{Tychonoff1930} of the topologies on $\mathcal K$. As it turns out, unlike in the countable case for the perfect set forcing, there are no useful topological properties of $\mathcal K^\zeta$ we may exploit.  We explain that now.

An important ingredient of the geometric iteration approach is that every countable power of 
the Cantor space $\mathcal D$ is homeomorphic to $\mathcal D$ itself. The proof of this relies on the compactness of 
$\mathcal D$. In particular, whatever topology we give to $\mathcal K$, we do not have the compactness and do not obtain that 
$\mathcal K$ is homeomorphic to its powers of size $\le\kappa$. We may try to ignore the topological properties of $\mathcal K^\zeta$  and work only with the combinatorial properties. As is known from descriptive set theory generalised to uncountable regular cardinals, for example \cite{FriedmanHyttinenKulikov2014}, not much of the ordinary descriptive set theory carries through. Notably because there is no metric available. What is happening here fits that pattern.

We may optimistically try to continue, saying that the proofs in  \cite{zbMATH01335192} use topology to be able to obtain applications about continuous functions and other subjects coming from descriptive set theory, which is not our concern. So we may try to concentrate only on the properties relevant to the combinatorics of forcing. We immediately arrive at a new difficulty, which is that what this is starting to look like is reducing the arbitrary iterations to the product with $\le\kappa$ supports of ${\mathbb P}(\mathcal F)$. However, this product does not in general preserve $\kappa^+$, even in the case
$\kappa=\aleph_0$, see \cite[Th 5.6]{Property-B}.

The solution comes from the following consideration. While we have shown in  \cite[Th 5.6]{Property-B} that products with $\le\kappa$-supports of ${\mathbb P}(\mathcal F)$ do not preserve $\kappa^+$, we have also shown  \cite[Th 8.1+Cor. 8.2]{Property-B} that the iterations with $\le\kappa$-supports along an ordinal do preserve all cardinals up to and including $\kappa^+$. Therefore we do not aim at an iteration-free solution as in geometric iterations, but we show that the arguments about any iteration of ${\mathbb P}(\mathcal F)$ along well-founded orders, made with supports of size $\le\kappa$, may to a large extent be reduced to the ones used when iterating along an ordinal. This is achived by using the well-founded rank of the order.

Another difficulty is to  define what is meant by an iteration of forcing along a partial order $I$ and with supports of size $\le \kappa$. Kanovei introduced this 
as early as \cite{zbMATH03685477}, but the concept is often credited to Marcia Groszek and Thomas Jech in \cite{zbMATH04187795}. They used it to study iterations of perfect set forcing along any partial order. However, their approach lacks detail and in our opinion is too general to be applicable the way that they intended it. In particular, it is absolutely unclear to us how to handle the dependence of a stage in the iteration on the previous stages, for example if the previous stages form an ill-founded set. Moreover, they themselves in \cite[\S1]{zbMATH04187795}, immediately after giving what is intended as a generalised iteration, pass to the case when all the supports are well-founded. We have developed our own approach, less general than the one of \cite{zbMATH03685477}, since we only need it for well-founded partial orders.

In conclusion, the topological arguments from \cite{zbMATH01335192} that apply to the prefect set forcing to have a simple argument for forcing along any partial order, work perfectly well, but only in the context of the perfect set forcing. For the forcing of the kind $\mathbb P(\FF)$, we have to go back to the idea of a generalised iteration. Various versions of these were introduced in the literature, including some incorrect ones, starting as early as by Stephen Hechler in \cite{zbMATH03510298}. We introduce the specific definition that we use, adapted to the kind of forcing used here. 

We are only able to develop the theory for iterations of ${\mathbb P}(\FF)$ along well-founded partial orders. Let us also note that this only makes sense under certain absolutness assumptions on $\FF$, which we explain below.

\section{Some relevant definitions and facts}\label{sec: ourdef} Throughout, $\chi$ stands for a regular cardinal large enough so that any mathematical object mentioned in the definitions below is an element of $\HH(\chi)$, and $\prec^\ast$ stands for a fixed well-order of $\HH(\chi)$. We recall that we work with an infinite cardinal $\kappa$ such that $\kappa^{<\kappa}=\kappa$. 

\begin{definition}\label{def:B-kappa-properness} We say that a forcing notion $\mathbb P$ is {\em $B$-$\kappa$-proper} if
there exists a sequence $\langle \leq_\zeta:\,\zeta<\kappa\rangle$ of partial orders on $P$, satisfying 
\begin{enumerate}
\item  \(\leq_0 = \leq\), and $\xi<\zeta\implies \le_\xi\supseteq \le_\zeta$, 
    \item  for $\delta$ limit $\in (0,\kappa)$, we have $\le_\delta=\bigcap_{\zeta<\delta} \le_\zeta$,
    \item for any
\begin{itemize}
\item $p\in \mathbb P$, $\zeta^*<\kappa$, $N\prec H(\chi)$ with $|N|=\kappa$ and $N^{<\kappa}\subseteq N$, such that $p, ( \mathbb P,\langle \le_\zeta:\,\zeta<\kappa\rangle) \in N$, $\kappa\subseteq N$,
\end{itemize}
there exists $q\ge_{\zeta^*} p$ that is $\mathbb P$-generic over $N$.
$\eod_{\mbox{\tiny{Def.}} \ref{def:B-kappa-properness}}$
\end{enumerate}

\end{definition}

\begin{definition}\label{def:strongclosure} A {\em strongly $(<\kappa)$-closed forcing notion} is a pair $({\mathbb P},F)$, where ${\mathbb P}$
is a forcing notion and
$F$ is a definable function which assigns to any increasing sequence $\bar{p}=\langle p_i:\,i<i^\ast<\kappa\rangle$ of conditions in ${\mathbb P}$ a common upper bound $F(\bar{p})$ of $\bar{p}$.

In this case we say that $F$ {\em witnesses} the strong $(<\lambda)$-closure of ${\mathbb P}$. If no confusion arises, we often just say that
${\mathbb P}$ is strongly $(<\lambda)$-closed when we mean that we have fixed a function $F$ such that $({\mathbb P},F)$ is a strongly $(<\lambda)$-closed forcing notion.
$\eod_{\mbox{\tiny{Def.}} \ref{def:strongclosure}}$
\end{definition}

\begin{definition}\label{def:strongfusion} We say that a forcing notion ${\mathbb P}$ together with a sequence
$\langle \le_\zeta:\,\zeta<\kappa\rangle$ of orders witnessing fusion satisfies {\em strong fusion}, if there exists a definable function $t$ with the following property:

for any sequence $\langle p_\zeta:\,\zeta\in [\zeta^\ast, \delta)\rangle$ for $0<\delta$ limit $\le\kappa$ and some $\zeta^\ast<\delta$ such that 
$p_\zeta \le_\zeta p_{\zeta+1}$ for all $\zeta \in [\zeta^\ast, \delta)$ and $p_\xi=t(\langle p_\zeta:\,\zeta\in [\zeta^\ast, \xi)\rangle)$ for all $\zeta^\ast<\xi$ limit $<\delta$, we have that $t(\langle  p_\zeta:\,\zeta\in [\zeta^\ast, \delta)\rangle)$ is the fusion limit of $\langle p_\zeta:\,\zeta \in [\zeta^\ast, \delta)\rangle$.
$\eod_{\mbox{\tiny{Def.}} \ref{def:strongfusion}}$
\end{definition}

\begin{remark}\label{rem:def} The notion of ``definable'' mentioned in Definitions \ref{def:strongclosure} and \ref{def:strongfusion} is often mentioned in set theory, but rarely explained. We explain exactly the kind of definability needed when one discusses forcing, on the example of {\em absolutely definable} filters in Definition \ref{not:F}.
\end{remark}

\begin{fact}\label{fact:properties-of-individual-forcing} The forcing ${\mathbb P}(\mathcal F)$ is $B$-$\kappa$-proper, strongly $(<\kappa)$-closed, and has
the strong fusion.
\end{fact}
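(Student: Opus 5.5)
The three properties would be verified directly from Definition \ref{gSacks:def}, using the orders $\le_\varepsilon$ defined there as the witnessing sequence. This follows the arguments of \cite{Property-B}, to which one could largely appeal.

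\textbf{Orders and strong closure.} First check that the $\le_\varepsilon$ form a decreasing sequence with $\le_0=\le$, and that they are continuous at limits. Both are immediate, since $p\le_\varepsilon q$ only requires $q$ to contain the nodes of $p$ whose splitting history has order type $<\varepsilon$. For strong $(<\kappa)$-closure, given an increasing sequence $\langle p_i:\,i<i^\ast\rangle$ with $i^\ast<\kappa$, I would let $F(\bar p)=\bigcap_i p_i$ and show that this is a condition.
\begin{itemize}
\item Closure of the tree under limits of length $<\kappa$ is preserved by intersections.
\item For $\FF$-splitting, fix $s\in\bigcap_i p_i$. Build an increasing sequence of nodes $t_\beta$ of length $<\kappa$, alternately moving into each $p_i$ and into splitting nodes. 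At limit stages use the closure of each $p_i$, and use the $(<\kappa)$-completeness of $\FF$ to intersect fewer than $\kappa$ splitting sets.
\item To obtain a node that splits simultaneously in all $p_i$, argue via clauses (o) and (i). The set of levels at which a branch splits in $p_i$ is closed, and the intersection of fewer than $\kappa$ clubs along a branch is again closed.
\item Clause (o) for the intersection also uses completeness of $\FF$.
\end{itemize}
The explicit formula makes $F$ definable, which is what Definition \ref{def:strongclosure} asks for.

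\textbf{Strong fusion.} Given $p_\zeta\le_\zeta p_{\zeta+1}$ with $p_\xi$ equal to the intersection at limit stages, I would take $t(\bar p)=\bigcap_\zeta p_\zeta$. The key point is that a node $s$ with ${\rm otp}({\rm deg}(s))<\zeta$ is frozen from stage $\zeta$ onward. Hence the splitting nodes of the limit are exactly those frozen at some stage, and each such splitting node keeps its full $\FF$-set of successors. This gives $\FF$-perfectness and clause (o). Closure of the splitting history follows because a limit of frozen nodes along a branch has splitting levels that form an increasing union of closed sets, whose limit point is itself included by clause (i) for each $p_\zeta$. Finally $p_\zeta\le_\zeta t(\bar p)$ for every $\zeta$. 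When the length is $\delta=\kappa$, every node of the intersection has splitting history of order type $<\kappa$, so it is frozen at some stage, which gives height $\kappa$.

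\textbf{$B$-$\kappa$-properness.} Fix $N$ with $N^{<\kappa}\subseteq N$ and $|N|=\kappa$, together with $\zeta^\ast$ and $p\in N$. Enumerate the dense open sets of $N$ as $\langle D_\alpha:\,\alpha<\kappa\rangle$. Then build a fusion sequence $p=p_{\zeta^\ast}\le_{\zeta^\ast}p_{\zeta^\ast+1}\le\cdots$. At stage $\zeta$, for each of the at most $\kappa^{<\kappa}=\kappa$ nodes $s$ of splitting order type exactly $\zeta$, extend the restriction $p_\zeta\rest s$ into $D_\zeta$ inside $N$, and amalgamate these extensions. The amalgamation is a $\le_\zeta$-extension because it only modifies cones above the frozen nodes. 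For $\zeta<\kappa$ each partial sequence lies in $N$ by the closure of $N$ under $<\kappa$-sequences, and limit stages use $t$. The fusion limit $q$ meets every $D_\alpha$ predensely below $q$ with witnesses in $N$, so $q$ is generic over $N$ and $q\ge_{\zeta^\ast}p$.

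\textbf{Main obstacle.} I expect the hardest step to be the limit stage: showing that the intersection still satisfies clause (i), the closure of splitting histories, and still has $\FF$-many successors at splitting nodes. This is precisely where the naive $\kappa$-perfect definition fails, as noted in the footnote, so it has to be checked carefully using closure of the trees together with $(<\kappa)$-completeness of $\FF$.
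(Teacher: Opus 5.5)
Your closure and fusion arguments via intersections use the same mechanism the paper relies on. The paper itself gives no argument here. It cites \cite{Property-B} for closure by intersections and for strong fusion. It obtains $B$-$\kappa$-properness indirectly, through the intermediate property $B(\kappa)$ and a general theorem that $B(\kappa)$ implies $B$-$\kappa$-properness. One small inaccuracy in your fusion part: for $\delta<\kappa$ it is false that every splitting node of $\bigcap_\zeta p_\zeta$ is frozen at some stage, but there your closure argument already shows the intersection is a condition. Your direct fusion proof of $B$-$\kappa$-properness is a legitimate alternative route. However, its successor step, as you describe it, does not always produce a condition.

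\textbf{The gap: the gluing step at limit $\zeta$.} On any branch $f$ of $p_\zeta$ the splitting levels $\langle c_\eta:\eta<\kappa\rangle$ form a closed set. So the unique node on $f$ with ${\rm otp}({\rm deg}_{p_\zeta}(s))=\zeta$ is $s=f\rest c_\zeta$, where $c_\zeta=\sup_{\eta<\zeta}c_\eta$. In particular ${\rm deg}_{p_\zeta}(s)$ is cofinal in $\dom(s)$. An arbitrary $r_s\in\DD_\zeta$ with $r_s\ge p_\zeta\rest s$ may have its stem $s[r_s]$ strictly longer than $s$. For $\DD_\zeta=\{r:\dom(s[r])>\gamma\}$ with $\gamma\ge\dom(s)$ it must. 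In the glued tree all splittings below $s$ survive, since each immediate successor of such a splitting node lies below some node $s'$ of order type $\zeta$, and $s'\in r_{s'}$. Hence for $u\supsetneq s$ the set ${\rm deg}(u)$ is cofinal in $\dom(s)$ but omits $\dom(s)$, which violates clause (i). So $p_{\zeta+1}\notin\mathbb P(\FF)$ in general. This is the obstacle you anticipated, but it arises at the gluing step, not at the intersection.

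\textbf{The repair.} Glue above the immediate successors $s\frown\alpha$ of the $\zeta$-th splitting nodes instead; there are still at most $\kappa^{<\kappa}=\kappa$ such cones. Take $r_{s\frown\alpha}\in\DD\cap N$ with $r_{s\frown\alpha}\ge p_\zeta\rest(s\frown\alpha)$. Then:
\begin{itemize}
\item every such $s$ keeps its whole $\FF$-set of successors, so clause (i) holds;
\item the glued tree is a $\le_{\zeta+1}$-extension of $p_\zeta$;
\item any $r\ge q$ contains some $s\frown\alpha$ for which $r\rest(s\frown\alpha)$ is above both $r$ and $r_{s\frown\alpha}$, which gives genericity.
\end{itemize}

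\textbf{Two smaller points.} First, the $\kappa$ many choices made at a successor step are not covered by $N^{<\kappa}\subseteq N$. To keep $p_{\zeta+1}\in N$, obtain the whole glued tree inside $N$ by elementarity from $p_\zeta,\DD\in N$. Second, your sequence starts at $\zeta^\ast$, so handle $\DD_\alpha$ at stage $\zeta^\ast+\alpha$ to make sure no dense set is skipped.
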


\begin{proof} The strong $(<\kappa)$-closure of ${\mathbb P}(\mathcal F)$ as witnessed by intersections is proved in \cite[Ex. 7.2(1)]{Property-B} and its strong fusion property is proved in  \cite[Ex. 7.8(1)]{Property-B}. In \cite[Cor. 4.10]{Property-B} it is proved that ${\mathbb P}(\mathcal F)$ satisfies property $B(\kappa)$, while in
\cite[Th 7.7]{Property-B} it is proved that every forcing satisfying property $B(\kappa)$ is $B$-$\kappa$-proper.
$\eop_{\ref{fact:properties-of-individual-forcing}}$
\end{proof}

\begin{fact}\label{fact:rank} If $I$ is a well-founded partial order, then there exists a unique ordinal $\alpha^\ast$ such that there is an order-preserving
surjection $\rho:\,I\twoheadrightarrow\alpha^\ast$.
\end{fact}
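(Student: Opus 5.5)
The plan is to take $\rho$ to be the canonical well-founded rank function and $\alpha^\ast$ to be its range. Since $I$ is well-founded, well-founded recursion along $<$ lets us define
\[
\rho(i)=\sup\{\rho(j)+1:\, j<i\},
\]
with $\rho(i)=0$ for minimal $i$. Here ``order-preserving'' will be read as strictly order-preserving, that is, $i<j\implies\rho(i)<\rho(j)$. The proof then has three steps: show this map is strictly order-preserving, show its range is an ordinal, and establish the correct form of uniqueness.

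First, strict order preservation is immediate from the definition: if $j<i$ then $\rho(j)+1\le\rho(i)$. Second, we show that $\alpha^\ast:=\rho[I]$ is downward closed in the ordinals, and hence is an ordinal. We prove by induction on $\rho(i)$ that every $\beta<\rho(i)$ lies in $\rho[I]$. Suppose $\beta<\rho(i)$. By the definition of the supremum there is $j<i$ with $\beta<\rho(j)+1$, so $\beta\le\rho(j)$. If $\beta=\rho(j)$ we are done. Otherwise $\beta<\rho(j)<\rho(i)$, and the induction hypothesis applied to $j$ gives $\beta\in\rho[I]$. Since $\rho[I]$ is a set of ordinals closed downwards, it is an ordinal $\alpha^\ast$, and $\rho:I\twoheadrightarrow\alpha^\ast$ is a surjection.

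The main obstacle is uniqueness, because taken literally it is false. If $I$ is a two-element antichain $\{a,b\}$, then both the constant map onto $1$ and the map $a\mapsto 0$, $b\mapsto 1$ onto $2$ are vacuously strictly order-preserving surjections. So the uniqueness must be read as uniqueness of the \emph{rank}, and I would prove it in one of two equivalent forms.

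The first form says that $\rho$ is the unique function satisfying the recursion above. If $\rho'$ also satisfies it, then a straightforward induction along the well-founded order shows $\rho=\rho'$, and hence the two ranges agree.

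The second form says that $\alpha^\ast$ is the least ordinal admitting a strictly order-preserving map from $I$. For this, take any strictly order-preserving $\rho':I\to\gamma$. Induction on $i$ gives $\rho'(i)\ge\rho(i)$: for each $j<i$ we have $\rho'(i)>\rho'(j)\ge\rho(j)$, so $\rho'(i)\ge\sup\{\rho(j)+1:\,j<i\}=\rho(i)$. Consequently $\gamma\ge\alpha^\ast$, so $\alpha^\ast$ is canonically and uniquely determined as this minimum.

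With either clarification in place, the statement of Fact \ref{fact:rank} holds, and $\rho$ is the rank function that the later sections use to reduce well-founded iterations to iterations along the ordinal $\alpha^\ast$.
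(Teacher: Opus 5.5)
Your proof is correct. For existence it is the paper's construction: both use the canonical rank $\rho(i)=\sup\{\rho(j)+1:\,j<i\}$ and read $\alpha^\ast$ off its range. Where the two differ, your version is the sound one.

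First, the paper gives minimal elements rank $1$ and then takes $\alpha^\ast$ to be the least ordinal not in the range. With that convention $0$ is never a value, so this $\alpha^\ast$ would be $0$. Your choice of $0$ for minimal elements avoids this off-by-one slip.

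Second, if $\alpha^\ast$ is defined as the least ordinal missing from the range, the nontrivial point is that no value of $\rho$ is $\ge\alpha^\ast$, i.e.\ that the range is downward closed. Your induction on $\rho(i)$ proves exactly this. The paper's one-sentence surjectivity argument only addresses the trivial direction.

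Third, the paper asserts that order preservation plus surjectivity already makes $\alpha^\ast$ unique. Your two-element antichain refutes this, under either the strict or the non-strict reading of ``order-preserving''. So the statement genuinely needs your reinterpretation.

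Of your two repairs, the first (uniqueness of the function satisfying the rank recursion) matches how the paper later speaks of ``the rank function associated to $I$'' in Definition \ref{def:mixed-iteration}. The second (minimality, proved via $\rho'(i)\ge\rho(i)$) gives an intrinsic characterisation of $\alpha^\ast$ that does not refer to the recursion at all.
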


\begin{proof} If $I=\emptyset$ then we have that $\alpha^\ast=0$, as witnessed by the empty function. Suppose that $I\neq\emptyset$ and define $\rho$ from $I$ into the ordinals by letting
\[
\rho(x)=
\begin{cases}
\sup\{\rho(y)+1:\,y<_I x\} &\mbox{ if }x\mbox{ is not minimal in }I,\\
1 &\mbox{ otherwise.}
\end{cases}
\]
Since $I$ is a well-founded set, this definition indeed gives an algorithm for computing the value of $\rho(x)$ for every $x\in I$. Since $I$ is a set, there is a least ordinal $\alpha^\ast$ which is not in the range of $\rho$. The construction assures that $\rho$ is onto $\alpha^\ast$, as supposing that there is 
$\alpha<\alpha^\ast$
which is not in the range of $\rho$ would be a contradiction with any $\beta\in (\alpha, \alpha^\ast)$ being in the range of $\rho$. Finally, $\rho$ is order-preserving
by the construction, and this property along with the surjectivity of $\rho$ implies that $\alpha^\ast$ is unique.
$\eop_{\ref{fact:rank}}$
\end{proof}

\section{Iterations of ${\mathbb P}(\mathcal F)$ along a well-founded order}\label{sec:it-wf} Our aim is to prove an iteration theorem
for the forcing notion ${\mathbb P}(\mathcal F)$ along any well-founded partial order, for a fixed filter $\FF$. More precisely, for a fixed {\em definition
of a filter $\FF$}. We may say, such as in our main example,``$\FF$ is the filter of the co-bounded subsets of $\kappa$'' and then $\name{\FF}$ names what we obtain by unravelling this definition in the extension.

Therefore, for the notion of iteration of  $\mathbb P(\mathcal F)$ to make sense, we must assume that $\name{\mathcal F}$ remains a $(<\kappa)$-complete filter on $\kappa$ after forcing with any $(<\kappa)$-closed forcing (to be precise, any strongly $(<\kappa)$-closed $B$-$\kappa$-proper forcing). This is guaranteed, in particular, if $\mathcal F$ is a ground-model filter which gains no new elements after such forcing. Since the forcing is $(<\kappa)$-closed,
it adds no sequences of length $<\kappa$ of ground-model objects.
Hence $\name{\mathcal F}_G$
is literally the same filter as it was in the ground model and it remains $(<\kappa)$-complete.

From now on, we let $\FF$ denote such a filter. We summarise our assumptions:

\begin{definition}\label{not:F} (1) Let $\kappa$ be an infinite cardinal satisfying $\kappa=\kappa^{<\kappa}$ and let
$\FF$ be a $(<\kappa)$-complete filter on $\kappa$ such that:
\begin{itemize}
\item there is a formula $D(x; a)$ in the language of set theory with the variable $x$ ranging over $H(\chi)$ and $a\in H(\chi)$ a set parameter, which 
satisfies that $\FF=\{X\subseteq \kappa:\,D(X;\kappa)\}$,
\item moreover, for any strongly $(<\kappa)$-closed $B$-$\kappa$-proper forcing $\mathbb P$, we have
\[
\forces_{\mathbb P}``\{\name{X}\subseteq \check{\kappa}:\,D(\name{X};\check{\kappa})\}=\check{\FF}".
\]
\end{itemize}
{\noindent (2)} Having fixed $D$ as above, we shall simplify the forcing notation and refer to the ${\mathbb P}$-name $\name{D}(\check{\kappa})$ simply by
$\FF$ and we shall say that $\FF$ is {\em absolutely definable}.
 \end{definition}

Note that the second item of Definition \ref{not:F}(1) indeed implies the first, by using $\mathbb P=\{\emptyset\}$.

\begin{definition}\label{def:mixed-iteration} Let $\rho_I:I\to \alpha^\ast$ be the rank function associated to a well-founded order $I$. 
By induction on the ordinal $\alpha^*$ we define what constitutes {\em an iteration along $I$ with supports of size $\le\kappa$} of ${\mathbb P}(\mathcal F)$
and what its limit ${\mathbb P}_{I}$ is. 
We shall denote such an iteration by 
\[
\langle ({\mathbb P}_{J}, \name{{\mathbb P}}(\mathcal F)_J):\, J \mbox{ a strict initial segment of } I\rangle.
\]
In this definition, $J$ ranges over the strict initial segments of $I$ and $\name{{\mathbb P}}(\mathcal F)_J$ stands for a fixed ${\mathbb P}_J$-name
for ${\mathbb P}(\mathcal F)$, which can be defined if we are justified in ${\mathbb P}_{J}$ being a forcing notion.

If $\alpha^*=0$, we have that $I=\emptyset$.
We define ${\mathbb P}_\emptyset=\{\emptyset\}$. Thus, ${\mathbb P}_\emptyset$ is a forcing notion whose minimal (and the only) element is $\emptyset$. We then have 
$\name{{\mathbb P}}(\mathcal F)_\emptyset=
{\mathbb P}(\mathcal F)$, since forcing with ${\mathbb P}_\emptyset$ makes no changes to the ground model.

Suppose that
$\alpha^\ast>0$. Since for every $J$ we have that $\rho\rest {J} $ is a rank function onto an ordinal $<\alpha^\ast$,  by the induction hypothesis we have a definition of what is an iteration along $J$ of ${\mathbb P}(\mathcal F)$ and what is its limit. If some of these items for a particular $J$ are not defined, we stop and
consider that neither $\langle ({\mathbb P}_{J}, \name{{\mathbb P}}(\mathcal F)_J):\, J \mbox{ a strict initial segment of } I\rangle$ nor ${\mathbb P}_{I}$
are defined. Otherwise, we assume as an induction hypothesis that:

\begin{itemize}
\item For every $J$, the forcing notion ${\mathbb P}_J$ is the limit with supports of size $\le\kappa$ of an iteration 
$\langle {(\mathbb P}_{K}, \name{{\mathbb P}}(\mathcal F)_K):\,K \mbox{ a proper initial segment of }J \rangle$ of ${\mathbb P}(\mathcal F)$, whose minimal
element is 
\[
\emptyset_{{\mathbb P}_J}= \emptyset \!\frown\! \langle \emptyset_{\name{{\mathbb P}}(\mathcal F)_K}, \,K \mbox{ a proper initial segment of }J\rangle,
\]
\item For every $J$ with a non-empty set $\Gamma$ of maximal elements and any $j\in \Gamma$, 
we have that 
\begin{itemize}
\item ${\mathbb P}_{[\le j]}={\mathbb P}_{[<j]} \ast \name{{\mathbb P}}(\mathcal F)_{[<j]}$, and
\item ${\mathbb P}_J$ is the set of all functions $p$ with domain $J$ such that for some $j\in \Gamma$ we have $p\rest [\le j]\in {\mathbb P}_{[\le j]}$
and $p(i)=\emptyset_{{\name{\mathbb P}(\mathcal F)_{{\mathbb P}_{[<i]}}}}$ for $i\in J \setminus [\le j]$.
(Note that this includes $\emptyset_{{\mathbb P}_J}$.)
\end{itemize}
\item For $J$ with no maximal element, the  elements of ${\mathbb P}_{J}$ are sequences of the form $p=\langle p(j):\,j\in J\rangle$ satisfying the following:\begin{enumerate}
\item for every $K$ a proper initial segment of $J$, the sequence $\langle p(k):\,k\in K\rangle$ is in ${\mathbb P}_K$,
\item each $p(j)$ is a canonical ${\mathbb P}_{[<j]}$-name for a condition in $\name{\mathbb P}_{[<j]}(\mathcal F)$,
\item ${\rm supt}(p)=\{j\in J:\,\neg (p\rest [<j]\forces_{{\mathbb P}_{[<j]}} ``p(j)=\emptyset_{\name{\mathbb P}_{[<j]} (\mathcal F)}")\}$ is a set of size $\le\kappa$.
\end{enumerate}
We have treated the elements of ${\mathbb P}_{J}$ as functions with domain $J$. It is with this in mind that we can define the restrictions of such functions to any subset of $J$.
\item The ordering $\le_J$ on ${\mathbb P}_J$ is defined by $p\le q$ iff the following hold:
\begin{enumerate}
\item if $J$ has a non-empty set $\{j_\gamma:\,\gamma<\gamma^\ast\}$ of maximal elements (necessarily pairwise incomparable), then for some $\gamma<\gamma^\ast$, we have
$p, q\in {\mathbb P}_{[\le j_\gamma]}$ and 
\[
(p\rest[<j_\gamma], p(j_\gamma))\le (q\rest [<j_\gamma], q(j_\gamma))
\]
in the two step iteration ${\mathbb P}_{[<j_\gamma]}\ast \name{\mathbb P}(\mathcal F)_{[<j_\gamma]}$,
\item if $J$ has no maximal element, then for all $j\in J$ we have 
\[
p\rest [<j]\le q\rest  [<j].
\]
\end{enumerate}
 \end{itemize}
Having made all these assumptions, there is only one way of defining ${\mathbb P}_I$ which will preserve them, and it is given by repeating all the items of the 
inductive hypothesis with $I$ in place of $J$. 
Given Definition \ref{not:F}, this definition is well posed iff all the forcings appearing in the inductive hypothesis are strongly $(<\kappa)$-closed $B$-$\kappa$ proper forcings. If this condition is not satisfied, we consider that the definition of the iteration and its limit ${\mathbb P}_I$ is impossible. Otherwise, we say that the iteration and its limit are well-defined.
$\eod_{\mbox{\tiny{Def.}} \ref{def:mixed-iteration}}$
\end{definition}

\begin{remark}\label{rem:why-always-the-same}
We note that the very construction of this definition is such that we are always forcing with the same forcing notion $\name{\mathbb P}(\mathcal F)$.
It is conceivable to replace $\name{\mathbb P}(\mathcal F)$ by some other fixed forcing notion, provided it has the required properties of closure, properness and amalgamation under ordinal-length iteration. In the case of $\kappa=\aleph_0$ we do so for Axiom A forcing 
in Theorem \ref{cor:aleph_0}. For uncountable $\kappa$, we do not at present have examples other than $\name{\mathbb P}(\mathcal F)$. It is also possible that a more elaborate approach might lead to being able to interleave different kinds of forcings into the iteration. Since for the moment in the case 
$\kappa>\aleph_0$ we do not even know 
how to do such an iteration along an ordinal, because of the amalgamation property which depends on using the same forcing all along, that situation is beyond the reach of our present methodology.

Apart from the references on perfect set forcing which we already mentioned, in the case of  $\kappa=\aleph_0$ generalised iterations have been studied in the literature on cardinal invariants. We have noticed various inaccuracies in that literature, mostly stemming from repeating without checking the errors we have already mentioned in this paper. While we hope that the specialists in this area will review their relevant papers and correct them, that direction is not our focus here. Our results for specifically for case $\kappa=\aleph_0$ are Theorem  \ref{cor:aleph_0} and Observation \ref{obs:Hechler}.
\end{remark}

\begin{notation}\label{not:padding} Suppose that $K$ is an initial segment of $J$ and $p\in {\mathbb P}_{K}$. By the {\em padding of $p$ to 
${\mathbb P}_{J}$} we mean the unique, up to forcing equivalence, condition $q\in {\mathbb P}_{J}$ such that $\supt(q)=\supt(p)$ and for every $j\in \supt(p)$ we have $\forces_{{\mathbb P}_{[<j]}}``q(j)=p(j)"$. We denote $q=p \rest^{-1}_K J$.
\end{notation}

\begin{observation}\label{lem:projection-forcing} Let $J$ be an initial segment of $I$ and let 
\[
\langle ({\mathbb P}_{K}, \name{{\mathbb P}}(\mathcal F)_K):\, K \mbox{ a strict initial segment of } I\rangle
\]
be an iteration along $I$ with supports of size $\le\kappa$ of ${\mathbb P}(\mathcal F)$.
Then 
\begin{itemize}
\item 
$\langle ({\mathbb P}_{K}, \name{{\mathbb P}}(\mathcal F)_K):\, K \mbox{ a strict initial segment of } J \rangle$
is an iteration along $J$ with supports of size 
$\le\kappa$ of ${\mathbb P}(\mathcal F)$,
\item the padding mapping embeds ${\mathbb P}_{J}$ as a complete
suborder of ${\mathbb P}_{I}$ and the mapping $p\mapsto p\rest J$ is a projection.
\end{itemize}.
\end{observation}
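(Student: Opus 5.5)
The plan is to argue by induction on the rank $\alpha^\ast$ of $I$ given by Fact \ref{fact:rank}, following the inductive structure of Definition \ref{def:mixed-iteration}.

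\emph{First item.} This is essentially bookkeeping. Take an initial segment $J$ of $I$. Every strict initial segment $K$ of $J$ is a strict initial segment of $I$. For every $j\in J$ we have $[<j]\subseteq J$, so the sets $[<j]$ and $[\le j]$ computed in $J$ coincide with those computed in $I$. The rank function $\rho_I\rest J$ is the rank function of $J$, onto some ordinal $\le\alpha^\ast$. So all clauses of the inductive hypothesis in Definition \ref{def:mixed-iteration}, with $J$ in place of $I$, are instances of the clauses already assumed for $I$: the successor clause ${\mathbb P}_{[\le j]}={\mathbb P}_{[<j]}\ast\name{\mathbb P}(\mathcal F)_{[<j]}$, the clauses for maximal elements and for the no-maximal-element case, and the well-posedness requirement that all forcings involved are strongly $(<\kappa)$-closed and $B$-$\kappa$-proper. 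Definition \ref{not:F} guarantees that the name $\name{\mathbb P}(\mathcal F)_K$ is interpreted in the same way whether $K$ is viewed inside $J$ or inside $I$. If $J=I$ there is nothing to prove.

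\emph{Second item.} I would check three things about the padding map $e(p)=p\rest^{-1}_J I$ and the restriction map $\pi(q)=q\rest J$.

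\begin{enumerate}
\item \textbf{Well-definedness.} For $q\in{\mathbb P}_I$ we have $q\rest J\in{\mathbb P}_J$, by clause (1) in the definition of ${\mathbb P}_I$ or by the maximal-element clause. Conversely, $e(p)$ is a condition: its support equals $\supt(p)$, which has size $\le\kappa$, and outside $J$ its coordinates are names for the trivial condition.
\item \textbf{Order.} Both $e$ and $\pi$ preserve the order, and $e$ preserves incompatibility. The order on ${\mathbb P}_I$ is defined coordinatewise through restrictions to the sets $[<j]$, and the coordinates of $e(p)$ outside $J$ are trivial. So $e(p)\le e(p')$ iff $p\le p'$, and any common extension $r$ of $e(p)$ and $e(p')$ gives the common extension $r\rest J$ of $p$ and $p'$.
\item \textbf{Projection property.} If $p'\ge q\rest J$ in ${\mathbb P}_J$, then some $r\ge q$ in ${\mathbb P}_I$ satisfies $r\rest J\ge p'$. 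Note also that $\pi\circ e$ is the identity up to forcing equivalence.
\end{enumerate}

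Once $\pi$ is a projection with $\pi\circ e=\mathrm{id}$, completeness of $e$ follows by the standard argument. Given a maximal antichain $A\subseteq{\mathbb P}_J$, suppose some $q\in{\mathbb P}_I$ is incompatible with every element of $e[A]$. Extend $q\rest J$ to some $p'$ above an element $a$ of $A$, then lift $p'$ to $r\ge q$ with $r\rest J\ge p'\ge a$. The remaining step is to show $r$ is compatible with $e(a)$, which I expect amounts to the amalgamation in the third point. Hence $e[A]$ is maximal.

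The main obstacle is constructing the amalgam $r$ in the projection property. I would define $r(j)=p'(j)$ for $j\in J$. For $j\in I\setminus J$, let $r(j)$ be the canonical ${\mathbb P}_{[<j]}$-name which equals $q(j)$. The subtle point is that for $j\notin J$, $q(j)$ is a name over ${\mathbb P}_{[<j]}$, and $[<j]$ may contain elements both inside and outside $J$, so it must be evaluated relative to the stronger condition $r\rest[<j]$. I would verify $r\rest[<j]\in{\mathbb P}_{[<j]}$ and $r\rest[<j]\ge q\rest[<j]$ by induction on $\rho_I(j)$. At successor-type stages the two-step iteration clause applies. At stages where $[<j]$ has no maximal element the ordering is defined by restrictions, so the inductive hypothesis carries over directly. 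The support of $r$ is contained in $\supt(p')\cup\supt(q)$, which has size $\le\kappa$. This well-founded induction, with the bookkeeping that $J\cap[<j]$ is an initial segment of $[<j]$, is where I expect the real work to lie. Rank induction is the tool that makes it go through, mirroring the ordinal case of \cite{Property-B}.
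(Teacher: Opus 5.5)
Your first item is exactly the paper's argument. In fact the paper's whole proof is the remark that initial segments of $J$ are initial segments of $I$, and that the clauses of Definition \ref{def:mixed-iteration} defining the iteration along $J$ and ${\mathbb P}_J$ never refer to $I$. The paper gives nothing further for the second bullet, so your verification of it goes beyond the paper. There it has a genuine gap.

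You read ${\mathbb P}_I$ as a coordinatewise iteration: any function with name-valued coordinates and support of size $\le\kappa$ is a condition, and $r\ge q$ is checked coordinate by coordinate. That is only the clause for segments without maximal elements. For a segment with a non-empty set $\Gamma$ of maximal elements, Definition \ref{def:mixed-iteration} makes the limit a lottery of the cones. Each condition is trivial outside a single $[\le j]$ with $j\in\Gamma$, and $p\le q$ requires $p$ and $q$ to lie in a common ${\mathbb P}_{[\le j_\gamma]}$. The paper relies on exactly this in the successor case of the proof of Theorem \ref{th:iteration}(i) and in Observation \ref{lem:antichains-of-I}.

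So your glued $r$ (equal to $p'$ on $J$ and to $q$ off $J$) is in general not a condition. No rank induction can repair this step, because the conclusion it aims at fails for the definition as written. Take $I=\{a,b\}$ an antichain and $J=\{a\}$. Let $q\in{\mathbb P}_I$ be non-trivial only at $b$, and let $p'\in{\mathbb P}_J$ be non-trivial at $a$.
\begin{itemize}
\item Every $r\ge q$ lies in the cone $[\le b]$, so $r(a)$ is trivial and $r\rest J\not\ge p'$. Thus $q\mapsto q\rest J$ is not a projection.
\item The same $q$ is incompatible with the padding of every non-trivial condition of ${\mathbb P}_J$. So a maximal antichain of ${\mathbb P}_J$ avoiding the trivial condition is no longer maximal after padding, and the padding is not a complete embedding.
\end{itemize}
Your gluing argument is the standard, correct one for the coordinatewise version of the iteration, which agrees with Definition \ref{def:mixed-iteration} when, e.g., $I$ is an ordinal. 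To use it here, you would have to replace the maximal-element clause of Definition \ref{def:mixed-iteration} by the coordinatewise one. That change would in turn invalidate Observation \ref{lem:antichains-of-I}.
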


\begin{proof}
The proof rests on the fact that if $K$ is an initial segment of $J$, then it is also an initial segment of $I$.
It then suffices to observe that in Definition \ref{def:mixed-iteration} the definitions of 
\[
\langle( {\mathbb P}_{K}, \name{{\mathbb P}}(\mathcal F)_K):\, K \mbox{ a strict initial segment of } J\rangle \mbox{ and }{\mathbb P}_{J} 
\]
are independent of $I$.
$\eop_{\ref{lem:projection-forcing}}$
\end{proof}

Now we adapt to the context used here the definition of iterations with amalgamation that we made for iterations along ordinals in \cite[Def.7.8]{Property-B}.
The partial orders $\le_\zeta$ are as defined by Definition \ref{gSacks:def}(3), with $\zeta$ in place of $\varepsilon$.

\begin{definition}\label{def:amalgamation} 
A well-defined iteration 
\[
\langle ({\mathbb P}_{J}, \name{{\mathbb P}}(\mathcal F)_J):\, J \mbox{ a strict initial segment of } I \rangle
\]
as in 
Definition \ref{def:mixed-iteration} is said to have {\em amalgamation} if for every relevant $J$, for every $C\in [J]^{<\kappa}$, for every $\zeta<\kappa$, for every dense open set 
$\DD\subseteq {\mathbb P}_J$ and every $p\in {\mathbb P}_J$, there exists $q$ such that:
\begin{itemize}
\item $q\in \DD$,
\item $q\ge p$,
\item for every $j\in J$, we have $q\rest [<j]\forces_{\mathbb P_{[<j]}}``q(j)\ge^{\name{{\mathbb P}}(\mathcal F)_{[<j]}}_{\zeta} p(j)"$.
$\eod_{\mbox{\tiny{Def.}} \ref{def:amalgamation}}$
\end{itemize}
\end{definition}

In \S\ref{sec:main} we prove our main theorem, Theorem \ref{th:iteration}.  
Together with the known facts we shall recall, it implies that iterating
$\mathbb P(\mathcal F)$ along any well-founded partial order preserves cardinals up to and including $\kappa^+$ (see Corollary \ref{cor:preserving-cardinals}).

\section{Iterating $\mathbb P(\mathcal F)$ along well-founded partial orders preserves cardinals $\le\kappa^+$}\label{sec:main}
The following is a general iteration theorem for $\mathbb P(\mathcal F)$. The theorem appearing in \cite[Th 7.11]{Property-B} is a special case of this result when $I$ is an ordinal.

We note, as we did in \cite[Obs. 7.9]{Property-B},  that if the result of an iteration of forcing is  $B$-$\kappa$-proper, the the iteration itself has amalgamation, by definition. The reason for stating Theorem \ref{th:iteration} in this form is that it is proved by induction on the rank of $I$.
At each inductive step, we use the amalgamation property of the iteration obtained so far, together with the new step, along with the inductive hypothesis of $B$-$\kappa$-properness on smaller ranks, to show that the limit of the iteration is $B$-$\kappa$-proper.

\begin{theorem}[General Iteration Theorem]\label{th:iteration} Suppose that $\kappa^{<\kappa}=\kappa$, $I$ is a well-founded partial order and $\mathcal F$ is a $(<\kappa)$-complete filter
on $\kappa$ which is absolutely definable (see Definition \ref{not:F}).

Assume that the iteration $\bar{P}_I=\langle ({\mathbb P}_{J}, \name{{\mathbb P}}(\mathcal F)_J):\, J \mbox{ a strict initial segment of } I \rangle$ made with supports of size $\le\kappa$ is well-defined. Then:
\begin{enumerate}
\item[(i)] the limit ${\mathbb P}_{I}$ of the iteration is a well-defined strongly $(<\kappa)$-closed forcing,
\item[(ii)] the iteration has amalgamation, and
\item[(iii)] ${\mathbb P}_{I}$ is $B$-$\kappa$-proper.
\end{enumerate}
\end{theorem}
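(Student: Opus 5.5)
The proof goes by induction on the rank $\alpha^\ast$ of $I$ given by Fact \ref{fact:rank}. For every strict initial segment $J$ of $I$ the rank of $J$ is $<\alpha^\ast$, so we may assume (i)--(iii) for all such $J$. This makes every $\mathbb P_J$ strongly $(<\kappa)$-closed and $B$-$\kappa$-proper. By Definition \ref{not:F}, $\FF$ is then the same filter in $V^{\mathbb P_J}$, so each $\name{\mathbb P}(\FF)_J$ is forced to satisfy Fact \ref{fact:properties-of-individual-forcing}. The base case $I=\emptyset$ is trivial. The inductive step splits into two cases, following the two clauses of Definition \ref{def:mixed-iteration}: $I$ has maximal elements, or it has none.

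\textbf{Case 1: $I$ has a nonempty set $\Gamma$ of maximal elements.} Every condition lives in some $\mathbb P_{[\le j]}=\mathbb P_{[<j]}\ast\name{\mathbb P}(\FF)_{[<j]}$, and comparable conditions live in a common such piece. Hence the statements reduce to the two-step iterations.
\begin{itemize}
\item Strong closure: an increasing sequence of length $<\kappa$ is eventually in one $\mathbb P_{[\le j]}$. There we apply the witness $F$ of $\mathbb P_{[<j]}$ to the first coordinates and the name for intersection in the last coordinate.
\item $B$-$\kappa$-properness and amalgamation: these are the standard two-step facts, where a composition of $B$-$\kappa$-proper forcings is $B$-$\kappa$-proper. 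We would quote the successor step of \cite[Th 7.11]{Property-B}, since $[<j]$ sits below $j$ exactly as an ordinal $\beta$ sits below $\beta+1$.
\end{itemize}

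\textbf{Case 2: $I$ has no maximal element.} Strong closure comes from applying the witnesses $F_{[<j]}$ coordinatewise. By induction along $I$, the restrictions to $[<j]$ are coherent, because the functions are definable. The support of the bound is the union of $<\kappa$ supports of size $\le\kappa$.

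For (iii) fix $N$, $p$, $\zeta^\ast$ as in Definition \ref{def:B-kappa-properness}. Since $|N|=\kappa$, the set $I\cap N$ has size $\le\kappa$; enumerate it in order type $\kappa$. We also enumerate the $\kappa$ dense open sets in $N$ as $\langle\DD_\varepsilon:\varepsilon<\kappa\rangle$, using $N^{<\kappa}\subseteq N$ and $\kappa^{<\kappa}=\kappa$. We then build a fusion sequence $p=p_{\zeta^\ast}\le p_{\zeta^\ast+1}\le\cdots$ of conditions in $N$. At stage $\varepsilon$ we pick a set $C_\varepsilon\in[I\cap N]^{<\kappa}$, increasing in $\varepsilon$ and exhausting $I\cap N$. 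We also require $p_{\varepsilon+1}\rest[<j]\forces p_{\varepsilon+1}(j)\ge_\varepsilon p_\varepsilon(j)$ for $j\in C_\varepsilon$, and that $p_{\varepsilon+1}$ meets $\DD_\varepsilon$ in the sense of predensity below it inside $N$. At limits we take the strong fusion function $t$ coordinatewise, which stays in $N$ by closure of $N$. The limit $q$ of the whole sequence is then $N$-generic and satisfies $q\ge_{\zeta^\ast}p$.

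The successor step is where amalgamation (ii) is needed. The amalgamation clause of Definition \ref{def:amalgamation} must now be verified for $I$ itself, which is the main obstacle. Given $C\in[I]^{<\kappa}$, $\zeta$, $\DD$ and $p$, we first pick $j_0\in I$. Because $I$ has no maximal element, the initial segment generated by $C$ might not be strict, but each element of $C$ lies in some $[<j]$. We therefore work with rank: for the projection $\DD\rest[<j]$ we use the inductive amalgamation of $\mathbb P_{[<j]}$, and we build an $\omega$- or $\kappa$-length chain of initial segments $J_0\subseteq J_1\subseteq\cdots$, each a strict initial segment of rank below the next. At each step we decide more of $\DD$ and move the $\le_\zeta$-amalgamation upward, and we glue the pieces by the strong fusion $t$. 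The key point to check is that gluing along a nonlinear union of initial segments is legitimate. This holds because the definition of $\mathbb P_J$ is local (Observation \ref{lem:projection-forcing}), and because the rank function lets us linearise, reducing to the ordinal-length amalgamation argument of \cite[Th 8.1]{Property-B}.
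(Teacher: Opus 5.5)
\textbf{There is a genuine gap in (ii) when $I$ has no maximal element.} You flag this step yourself. Your fusion construction inside $N$ needs, at every successor stage, amalgamation for dense subsets of $\mathbb P_I$ itself. The inductive hypothesis on strict initial segments does not give this. Your proposed reduction (``the rank function lets us linearise, reducing to Theorem 8.1 of \cite{Property-B}'') does not work either.

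In an ordinal-length iteration one passes from ``the projection of $\DD$ is predense above $q\rest J_n$'' to ``$\DD$ is predense above $q$'' by mixing. Fix an antichain $A\subseteq\mathbb P_{J_n}$ of size $\le\kappa$, maximal above $q\rest J_n$, and witnesses $r_s\in\DD$ with $r_s\rest J_n\le s$ for $s\in A$. Then, for $i\notin J_n$, let $q(i)$ be a name for $r_s(i)$, where $s$ is the unique element of $A\cap G_{J_n}$. This is a legitimate coordinate only because $[<i]\supseteq J_n$, so that $G_{J_n}$ is available at stage $i$. The same fact is why your Case 1 is sound: there $j$ lies above all of $[<j]$.

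In $\mathbb P_I$, however, $q(i)$ must be a $\mathbb P_{[<i]}$-name. For a rank segment $J_n=\{k\in I:\rho(k)<\alpha_n\}$ and $i\notin J_n$, one has in general $[<i]\not\supseteq J_n$; this already happens when $I$ is the disjoint union of two copies of $\omega$. So the mixed condition does not exist. Put differently, the rank segments form a well-ordered chain, but passing from $J_n$ to $J_{n+1}$ is not an iteration over $V^{\mathbb P_{J_n}}$ of copies of $\mathbb P(\FF)$ computed in the current model. Each new coordinate $i$ is generic only over the smaller model $V^{\mathbb P_{[<i]}}$, so the ordinal argument does not transfer along $\rho$.

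Neither of your other tools supplies the missing tails. Observation \ref{lem:projection-forcing} only says that restriction is a projection and padding a complete embedding. The function $t$ only fuses, coordinatewise, sequences already constructed. As it stands, (ii) and hence (iii) remain unproved in your Case 2.

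\textbf{Comparison with the paper.} The paper also uses rank segments, but only to glue coherent, coordinatewise-defined objects, where no mixing is needed. These are the closure bounds in (i) and the fusion limits of Lemma \ref{lem:fusion-lem}. Both are obtained by the same three-case rank induction:
\begin{itemize}
\item successor rank, through the two-step iteration below a maximal element;
\item limit rank of cofinality $\le\kappa$, via the segments $K_\zeta$;
\item cofinality $>\kappa$, by bounding the supports and padding.
\end{itemize}
Part (iii) then comes from Lemma \ref{lem:amalg-implies-proper}.

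Your coordinatewise closure and fusion limit match this. Your successor step, asking for a part of $\DD_\varepsilon\cap N$ of size $\le\kappa$ predense above $p_{\varepsilon+1}$, is the workable formulation. Literally asking for $p_{\varepsilon+1}\in\DD_\varepsilon$ together with $p_\varepsilon\le_{C_\varepsilon,\varepsilon}p_{\varepsilon+1}$, as in the proof of Lemma \ref{lem:amalg-implies-proper}, cannot be arranged in general: for $\varepsilon\ge 2$ the second requirement freezes the stem at every coordinate in $C_\varepsilon$.

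Be aware that the paper's part (ii) consists only of Definition \ref{def:F-n} and the Fusion Lemma. Amalgamation is then used as a hypothesis of Lemma \ref{lem:amalg-implies-proper}, so there is no argument in the paper you could borrow to close the gap. What you need is an amalgamation argument for $\mathbb P_I$ in which every coordinate $q(i)$ is determined by $G_{[<i]}$ alone.
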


\begin{proof} The proof proceeds by induction on the well-foundedness rank $\alpha^\ast$ of $I$. If $\alpha^\ast=0$ the conclusion is vacuously true, so we assume
$\alpha^\ast>0$. We note that for every proper initial segment $J$ of $I$, its well-foundedness rank is $<\alpha^\ast$. 
Hence, we have that for any such $J$, the iteration 
$\langle ({\mathbb P}_{K},  \name{{\mathbb P}}(\mathcal F)_K):\, K \mbox{ an initial segment of } J \rangle $ is a well-defined iteration
along $J$ with supports of size $\le\kappa$. It follows by the induction hypothesis applied on this
iteration that
its limit ${\mathbb P}_{J}$ satisfies the properties stated in the statement of the theorem.

Since we have assumed that $\mathcal F$ is absolutely definable, the above properties of ${\mathbb P}_{J}$ allow us to conclude
that for every relevant $J$, the filter $\mathcal F$ remains a $(<\kappa)$-complete filter on $\kappa$ after the forcing with ${\mathbb P}_{J}$. 
Therefore, by Fact  \ref{def:mixed-iteration}, there is a ${\mathbb P}_{J}$-name
$\name{f}_J$ for a function witnessing that $\name{{\mathbb P}}(\mathcal F)_J$ is strongly $(<\kappa)$-closed. 
\medskip

{\noindent (i)} We prove that ${\mathbb P}_{I}$ is strongly $(<\kappa)$-closed as witnessed by a function $h_{I}$.

Suppose that $\bar{p}=\langle p_\varepsilon:\,\varepsilon<\varepsilon^\ast\rangle$ is an increasing sequence in ${\mathbb P}_{I}$ and
 $\varepsilon^\ast<\kappa$. By induction on the rank
$\alpha<\alpha^\ast$ of a proper initial segment $J$ of $I$, we choose an upper bound 
$q_J=h_J(\langle p_\varepsilon\rest J:\,\varepsilon<\varepsilon^\ast\rangle)$, and we claim that we can do it so that 
for every $K$ an initial segment of $J$, both of which are proper initial segments of $I$, we have $q_K= q_J\rest K$. To be able to do this, note that if $K$ is a proper initial segment of $J$, then its well-foundedness rank is strictly less than that of $J$. The proof proceeds by induction on the well-foundedness rank 
$\alpha$ of $J$. The inductive hypothesis is that for every $K$ a proper initial segment  of $J$, we have chosen
\[
q_K=h_K(\langle p_\varepsilon\rest K:\,\varepsilon<\varepsilon^\ast\rangle)
\]
and hence implicitly, that the functions $h_K$ satisfy that for any $K,K'$ proper initial segments of $J$, with $K$ an initial segment of $K'$, for
every increasing sequence $\bar{r}$ of length $<\kappa$ in ${\mathbb P}_{K'}$, we have that
\[
h_{K'}(\bar{r})\rest K= h_{K}(\overline{r\rest K}).
\]
Notice that this requirement makes sense, since if $\bar{r}$ is increasing in ${\mathbb P}_{K'}$, then $\overline{r\rest K}$ is increasing in ${\mathbb P}_{K}$. We shall in particular define $h_J$ so that this inductive requirement is preserved.

The case \underline{$\alpha=0$} is trivial. Consider the case \underline{$\alpha=\beta+1$}. This means that $J$ has a non-empty set of maximal elements,
$\{j_\gamma:\,\gamma<\gamma^\ast\}$. In particular, these maximal elements are pairwise incomparable in $J$. 
By the definition of the forcing ${\mathbb P}_J$, we have that for two conditions $p,q\in {\mathbb P}_J$ to be
comparable, there must be a $\gamma<\gamma^\ast$ such that $p,q\in {\mathbb P}_{[\le j_\gamma]}$. Since the sequence $\bar{p}$ is increasing, such
a $\gamma$ must be common to all $p_{\varepsilon}$. Let us denote that $j_\gamma$ by $j$. Notice that then we have that for every $\varepsilon$ the condition $p_\varepsilon \rest J$ is in fact $p_\varepsilon \rest {[\le j]}$.

By the induction hypothesis, ${\mathbb P}_{[<j]}$ is a strongly 
$(<\kappa)$-closed forcing, as witnessed by a function $h_{[<j]}$. By the definition of the iteration Definition \ref{def:mixed-iteration}, we have ${\mathbb P}_{[\le j]}={\mathbb P}_{[<j]}\ast \name{{\mathbb P}}(\mathcal F)_{[< j]}$. 
We define
\[
q_J\!=\!h_J(\!\langle p_\varepsilon\rest J:\,\varepsilon<\varepsilon^\ast\rangle\!)\!=\!(h_{[<j]} (\!\langle (p_\varepsilon\rest [<j]:\,\varepsilon<\varepsilon^\ast\rangle\!), 
\name{f}_{[< j]}(\!\langle p_\varepsilon(j):\,\varepsilon<\varepsilon^\ast\rangle\!).
\]
This is a well-defined condition in ${\mathbb P}_{[\le j]}$, hence in ${\mathbb P}_J$, and it is clearly forced by $\mathbb{P}_{[\le j]}$ and so by ${\mathbb P}_J$ to be a common upper
bound to $\bar{p}\rest J$. Moreover, the inductive hypothesis that $q_{[<j]}=h_{[<j]} (\langle (p_\varepsilon\rest [<j]:\,\varepsilon<\varepsilon^\ast\rangle)$
and the inductive hypothesis on its properties, imply that
$q_K= q_J\rest K$ is satisfied for every initial segment $K$ of $J$.

Now suppose that \underline{$\alpha>0$ is a limit ordinal of cofinality $\theta\le\kappa$}. Let $\langle \alpha_\zeta:\,\zeta<\theta\rangle$ be an increasing
sequence of ordinals whose supremum is $\alpha$. Since our definition of $h_J$ needs to be functional,  hence no ambiguity is allowed, we choose 
$\langle \alpha_\zeta:\,\zeta<\theta\rangle$ as the $\prec^\ast_\chi$-first such sequence in $\HH(\chi)$.
For $\zeta<\theta$ define
\[
K_\zeta=\{j\in J:\,\rho_J(j)<\alpha_\zeta\}.
\]
Hence $K_\zeta$ is a well-founded order of rank $\alpha_\zeta$ and it is downward closed in $J$, by the definition of $\rho_J$. Moreover, for $\xi\le\zeta$
we have that $K_\xi$ is an initial segment of $K_\zeta$.

By the inductive assumption we have defined the conditions $\langle q_{K_\zeta}:\,\zeta<\theta\rangle$ using a sequence
$\langle h_{K_\zeta}:\,\zeta<\theta\rangle$ of functions where each  $h_{K_\zeta}$ witnesses the strong $(<\kappa)$-closure of 
${\mathbb P}_{K_\zeta}$, so that $q_{K_\zeta}$ is a common upper bound of $\langle p_{\varepsilon}\rest K_\zeta:\,\varepsilon <\varepsilon^\ast
\rangle$ and so that $\xi<\zeta\implies q_{K_\xi}=q_{K_\zeta}\rest K_\xi$. Now we define $q_J=h_{J}(\bar{p})$ as the unique function $q$ with domain $J$ such that $q_{K_\zeta}=q_{K_\zeta}$ for all $\zeta$. Then $q_J$ is a condition in 
${\mathbb P}_J$, since the fact that $\theta\le\kappa$ implies that the support of $q_J$ has size $\le\kappa$. It follows from the definitions and the inductive assumptions that $q_J$ has all the required properties.

The final case is when \underline{$\cf(\alpha)=\theta>\kappa$}. Let 
$\beta=\sup\{\rho_J (p_\varepsilon\rest J)+1:\,\varepsilon < \varepsilon^\ast\}$, hence $\beta<\alpha$. Let $K=\{j\in J:\,\rho_J(j)<\beta\}$, so $K$ is a proper initial segment of $J$ whose rank is 
$\beta$. By the inductive hypothesis, $h_K$ and $q_K$ have been defined as required. Let 
\[
q_J=h_J(\bar{p}\rest J)=q_K\rest^{-1}_K J.
\]
It follows that $q_J$ is as required.

\medskip

{\noindent (ii)} In order to demonstrate the amalgamation, we  need to define some auxiliary orders and some notation. For $j\in J$ let 
$\langle  \le_\zeta^{{\name{\mathbb P}(\FF)}_{[<j]}}:\,\zeta<\kappa\rangle$ be a fixed sequence of ${\mathbb P}_{[<j]}$-names for the orders witnessing that
$\name{\mathbb P}(\FF)_{[<j]}$ has strong fusion.

\begin{definition}\label{def:F-n} (a) For $J$ an initial segment of $I$ and $C\subseteq J$ of size $<\kappa$, for $\zeta<\kappa$ and $p,q\in {\mathbb P}_J$, we define what it means that 
$p\le_{C,\zeta}^J q$:

$p\le_{C,\zeta}^J q \mbox{ iff }$
\begin{itemize}
\item $p\le q$,
\item $(\forall j \in C)\,q\rest[<j]\forces_{{\mathbb P}_{[<j]}}``p(j)\le_\zeta^{{\name{\mathbb P}(\FF)}_{[<j]}} q(j)"$.
\end{itemize}

We note that $\le_{C,\zeta}^J$ is a partial order with the least element $\emptyset_{{\mathbb P}(\mathcal F)_J}$ and that for $K$ an initial segment of
$J$, an initial segment of $I$, for $p, q\in {\mathbb P}(\mathcal F)_J$, we have $p \le_{C,\zeta}^K q$ iff $p \rest^{-1}_K J \le_{C,\zeta}^J q\rest^{-1}_K J$.

\medskip

{\noindent (b)} A {\em fusion sequence in} ${\mathbb P}_J$ is a sequence of the form 
\[
\langle (p_\zeta, C_\zeta):\,\zeta\le\delta \rangle \mbox{ such that}:
\]
\begin{enumerate}
\item $\delta\le\kappa$ is a limit ordinal,
\item each $p_\zeta\in  {\mathbb P}_J$ and for all $\zeta$ we have $p_\zeta \le_{C_\zeta,\zeta}^J p_{\zeta+1}$,
\item the sequence $\langle C_\zeta:\,\zeta<\delta\rangle$ is an increasing continuous sequence of elements of $[J]^{<\kappa}$,
\item if $j\in  C_{\zeta^\ast}\cap\supt(p_{\zeta^\ast})$, where $\zeta^\ast<\delta$ is the first such ordinal,
then 
\begin{equation}\label{eq:suc-fusion}\tag{*}
p_{\zeta^\ast}\rest [<j]\forces _{\mathbb P_{[<j]}}``\langle p_\zeta(j):\,\zeta\in [\zeta^\ast, \delta)\rangle \mbox{ is a fusion sequence in }
{\mathbb P}(\mathcal F)_{[<j]}",
\end{equation}
\item if $\xi< \delta$ is a limit ordinal and $j \in C_\xi$, then 
\[
p_\xi\rest [<j] \forces_{\mathbb P_{[<j]}}``p_\xi(j)=\name{t}_j (\langle p_\zeta(j):\,\zeta<\xi \rangle)".
\]
\end{enumerate}
\end{definition}

\begin{lemma}[Fusion Lemma]\label{lem:fusion-lem} Let $J$ be an initial segment of $I$, not necessarily proper. Then there is a function $H_J$
acting on fusion sequences in ${\mathbb P}_J$  such that the following holds
for any limit ordinal $\delta\le\kappa$:

if $\langle (p_\zeta, C_\zeta):\,\zeta<\delta\rangle$ is a fusion sequence in
${\mathbb P}_J$, such that for all limit $\xi<\delta$ we have $p_\xi=H_J(\langle (p_\zeta, C_\zeta):\,\zeta<\xi\rangle)$,
then letting $q=H_J(\langle (p_\zeta, C_\zeta):\,\zeta<\delta\rangle)$, we have that
\begin{itemize}
\item $q\in {\mathbb P}_J$ and
\item $p_\zeta\le_{C_\zeta,\zeta}^J q$ holds for any $\zeta<\delta$. 
\end{itemize}
\end{lemma}

\begin{proof}[Proof of Lemma \ref{lem:fusion-lem}] The proof is by induction on $\delta$, for all $J$ simultaneously.
We seek to construct $H_J(\langle (p_\zeta, C_\zeta):\,\zeta<\delta\rangle)$, which we denote by $q^J$.

For $\delta=0$, we let $q^J=\emptyset_{\mathbb P_J}$.

Now suppose that $\delta\in (0,\kappa]$.
We define $q^K\in {\mathbb P}_K$ for $K$ an initial segment of $J$ by induction on the well-foundedness rank $\alpha$ of $K$. The requirements of the induction are:
\begin{itemize}
\item for any $\zeta<\kappa$ we have $q^K\ge_{C_\zeta\cap K, \zeta}^K \,p_\zeta\rest K$,
\item for $K$ an initial segment of $K'$ an initial segment of $J$,  we have $q^{K'}\rest K =q^K$.
\end{itemize}

The induction steps are similar to those in the proof of (i), as follows.

\underline{$\alpha=0$.} We let $q^0=\emptyset_{{\mathbb P}_\emptyset}$.

\underline{$\alpha=\beta+1$}. This means that $K$ has a non-empty set of maximal elements,
$\{j_\gamma:\,\gamma<\gamma^\ast\}$. These maximal elements are incomparable in $K$. 
We have that for two conditions $p,q\in {\mathbb P}_K$ to be
comparable, there must be a $\gamma<\gamma^\ast$ such that $p,q\in {\mathbb P}_{[\le j_\gamma]}$.

Since the sequence $\langle p_\zeta\rest K:\,\zeta<\delta\rangle$ is increasing in ${\mathbb P}_K$, such
a $\gamma$ must be common to all $p_\zeta\rest K$. Let us denote that $j_\gamma$ by $j$. Notice that then we have that for every $\zeta$ the condition $p_\zeta \rest K$ is in fact $p_\zeta \rest {[\le j]}$.
We have assumed that $q^{[<j]}$ has been defined as required. 

If $j\notin \bigcup_{\zeta<\delta} \supt (p_\zeta)$, we let $q^K=q^{[<j]} \rest_{[<j]}^{-1} K$.
The conclusion then follows by the
induction hypothesis.

Otherwise, $j \in  \bigcup_{\zeta<\delta} \supt (p_\zeta)$, hence we can find a minimal $\zeta<\delta$
such that $j \in \supt(p_\zeta)$, call it $\zeta^\ast$. In $V^{{\mathbb P_{[<j]}}}$ we shall define $q(j)\in \mathcal{P}(\FF)$ using the function $t$
from Definition \ref{def:strongfusion} on a sequence of the form
$\bar{r}=\langle r_\zeta:\,\zeta<\delta\rangle$, which we define as follows. For $\zeta\le\zeta^\ast$ we define $r_\zeta=p_{\zeta^\ast}(j)$ and for 
$\zeta>\zeta^\ast$ we define $r_\zeta=p_\zeta(j)$. By equation (\ref{eq:suc-fusion}) in the definition of a fusion sequence, it follows that we have defined a fusion sequence.

We therefore define $q^{[\le j]}=(q^{[<j]}, \name{t}(\bar{r}))$. Then let $q^K=q^{[\le j]} \rest_{[\le j]}^{-1} K$.

\underline{$\alpha$ is a limit ordinal of cofinality $\le\kappa$.} We construct $q^K$ in the exactly same way as in the proof of (i) of Theorem \ref{th:iteration}, where we replace $J$ by $K$ and $h_{K_\zeta}$ by $H_{K_\zeta}$. Since we have already used the index $\zeta$ for the fusion sequence, we shall
write $\langle\alpha_\varepsilon:\,\varepsilon <\cf(\alpha)\rangle$ for the chosen cofinal sequence of $\alpha$ used in the construction. At the end, $q^K$ is the unique condition in ${\mathbb P}_K$ satisfying that  for any initial segment $K'$ of $K$ we have $q^K \rest {K'}=q^{K'}$. Hence, as in (1), $q^K$ is a well-defined condition in ${\mathbb P}_K$. We have to check that
$p\le^K_{C_\zeta\cap K, \zeta} q^K$ holds for each $\zeta$. 

Note that $K=\bigcup_{\varepsilon <\cf(\alpha)} K_{\alpha_\varepsilon}$. Clearly, we have $p\le q$, since for every
$\varepsilon$ we have $p\rest K_{\alpha_\varepsilon}\le q\rest  K_{\alpha_\varepsilon}$. Let $\zeta<\delta$ be arbitrary and suppose $j\in K\cap C_\zeta$. Hence there is  $\varepsilon <\cf(\alpha)$ such that $j\in K_{\alpha_\varepsilon}\cap C_\zeta$. By the induction hypothesis we have 
$q\rest [<j]\forces_{{\mathbb P}_[<j]}
``p(j)\le_\zeta^{{\name{\mathbb P}(\FF)}_{[<j]}} q(j)"$, as required.

\underline{$\beta$ is a limit ordinal of cofinality $>\kappa$.} This case is handled trivially by the requirement on the size of the supports. By that 
requirement we have that there exist an initial segment $K'$ of $K$ such that $\bigcup_{\zeta<\delta}\supt(p_\zeta)\subseteq K'$. 
Hence we have that for every $\zeta<\delta$ the equality $p_\zeta=p_\zeta\rest^{-1}_{K'} K$ holds and so it suffices to let  $q^K=q^{K'}\rest^{-1}_{K'} K$.
$\eop_{\ref{lem:fusion-lem}}$
\end{proof}

{\noindent (iii)} For this proof, we assume that we have proven (i)-(ii) for ${\mathbb P}_I$ and we prove (iii). The idea is to
to use the assumption on the iteration having amalgamation to boost the proof that its limit is $B$-$\kappa$-proper. The proof is essentially identical to that of \cite[Lem. 7.14]{Property-B}; we isolate the key lemma for future reference. In Lemma
\ref{lem:amalg-implies-proper} we allow the possibility that $\name{Q}_J$ are not necessarily of the form $\name{\mathbb P}(\FF)_J$, but any 
individual forcings that would give us the required properties of the iteration. What is meant by an iteration is analogous to Definition \ref{def:mixed-iteration}.

\begin{lemma}\label{lem:amalg-implies-proper} Suppose that $I$ is a well-founded order and that the iteration
\[
\langle ( {\mathbb P}_J, \name{Q}_J):\,J  \mbox{a proper initial segment of }I\rangle
\]
made with supports of 
size $\le\kappa$ is well-defined, strongly $(<\kappa)$-closed, has strong fusion and amalgamation. Then its limit  ${\mathbb P}_I$ is 
$B$-$\kappa$-proper.
\end{lemma}

\begin{proof}
Let 
\[
 x=\langle\bar{P}_I=\langle ({\mathbb P}_J, \name{Q}_J):\,J \mbox{ a proper initial segment of }I\rangle, \kappa , I\rangle
 \]
 and let $H$ be the function witnessing the
amalgamation of $\bar{P}_I$. Suppose that
$N\prec \HH(\chi)$ with $|N|=\kappa$ and $N^{<\kappa}\subseteq N$ is such that $x, p \in N$. It follows that
${\mathbb P}_I\in N$. 

Let $\langle \DD_\zeta:\;\zeta<\kappa\rangle$ be an enumeration of all dense open sets of ${\mathbb P}_I$ which are in $N$. Let $N\cap I=
\bigcup_{\zeta<\kappa} C_\zeta$ be a continuous non-decreasing union where for each $\zeta$ we have $|C_\zeta|<\kappa$. In particular, each $C_\zeta\in N$.

By induction on $\zeta<\kappa$ we construct 
a fusion sequence
$\langle (p_\zeta, C_{f(\zeta)}):\,\zeta<\kappa\rangle$ such that
\begin{itemize}
\item $p_0=p$, $p_\zeta\in N$, 
\item $p_{\zeta+1}\in \DD_\zeta$,
\item for any limit $0<\delta<\kappa$ we have $p_\delta=H(\langle (p_\zeta, C_{f(\zeta)}):\,\zeta<\delta\le\kappa\rangle)$.
\end{itemize}
This induction is made possible by the fact that if $p_\zeta\in N$, then $\supt(p_\zeta) \in N$. By elementarity we have that there is a surjection $g:\,\kappa
\twoheadrightarrow\supt(p_\zeta)$ with $g\in N$, so for every $\alpha<\kappa$ we have $g(\alpha)\in N$. We conclude that $\supt(p_\zeta)\subseteq N\cap I$.

To achieve the successor stages of the induction, we shall use the property of amalgamation, applied within $N$. That property guarantees that 
we can find  $p_{\zeta+1}\in N\cap \DD_\zeta$ with $p_\zeta \le_{C_\zeta, \zeta} p_{\zeta+1}$.

We let $f(\zeta+1)$ be the least $\xi$ such that $\supt(p_{\zeta+1})\subseteq C_\xi$.

At non-zero limit stages $\delta$ we
observe that the construction is done so that the function $H$ applies to the sequence $\langle (p_\zeta, C_{\zeta}):\,\zeta<\delta\rangle$, so we let $p_\delta=H(\langle (p_\zeta, C_\zeta):\,\zeta<\delta\rangle)$, which by elementarity is an element of $N$.

At the end, we let $q=H(\langle (p_\zeta, C_{f(\zeta)}):\,\zeta<\kappa\rangle)$.
We shall show that $q$ is ${\mathbb P}_I$-generic over $N$. 

Let $\DD$ be a dense open subset of ${\mathbb P_{I}}$ with $\DD\in N$. We claim that $\DD\cap N$ is predense above $q$. So let $r\ge q$
and let $\DD=\DD_\zeta$ for some $\zeta$. We need to find a condition in $\DD\cap N$ comparable with $r$ and we claim that $p_{\zeta+1}$
is such a condition. We have that $p_{\zeta+1}\le_{C_{\zeta+1}, {\zeta+1}} q$ and so clearly $p_{\zeta+1}\le r$. 
$\eop_{\ref{lem:amalg-implies-proper}}$
\end{proof}

$\eop_{\ref{th:iteration}}$
\end{proof}

\begin{corollary}\label{cor:preserving-cardinals} Let $\kappa$ and $\mathcal F$ be as above. The iteration of ${\mathbb P}(\mathcal F)$ along any well-founded partial order preserves all cardinals up to and including $\kappa^+$.
\end{corollary}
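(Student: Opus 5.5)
By Theorem \ref{th:iteration}, ${\mathbb P}_I$ is strongly $(<\kappa)$-closed and $B$-$\kappa$-proper. The plan is to derive the two halves of cardinal preservation from these two properties separately, exactly as in the ordinal case \cite[Cor. 8.2]{Property-B}.

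\emph{Cardinals $\le\kappa$.} Since ${\mathbb P}_I$ is strongly $(<\kappa)$-closed, it is in particular $(<\kappa)$-closed. A standard argument then shows that it adds no new sequences of length $<\kappa$ of ground-model elements. Indeed, given a name $\name{f}$ for a function from $\mu<\kappa$ into $V$ and a condition $p$, we build an increasing sequence $\langle p_i : i\le\mu\rangle$ in which $p_{i+1}$ decides $\name{f}(i)$. At limit stages we use the closure function $h_I$. The final condition then decides all of $\name{f}$. Consequently no cardinal $\le\kappa$ is collapsed, and cofinalities $<\kappa$ are preserved. For $\kappa$ itself we argue as follows: if $\kappa$ were collapsed, there would be a new surjection from some $\mu<\kappa$ onto $\kappa$. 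The values of this surjection form a $\mu$-sequence of ordinals, which must already lie in the ground model, and that is a contradiction.

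\emph{Preserving $\kappa^+$.} Suppose toward a contradiction that $p\forces``\name{g}:\check\kappa\to(\kappa^+)^V$ is onto''. Choose $N\prec\HH(\chi)$ with $|N|=\kappa$, $N^{<\kappa}\subseteq N$, $\kappa\subseteq N$, and $p,\name{g},{\mathbb P}_I\in N$. Such an $N$ exists because $\kappa^{<\kappa}=\kappa$: we build $N$ as the union of an increasing chain of length $\kappa$ of elementary submodels of size $\kappa$, closing under $(<\kappa)$-sequences at each step, and the regularity of $\kappa$ makes the union closed. By $B$-$\kappa$-properness (taking $\zeta^*=0$) there is $q\ge p$ that is ${\mathbb P}_I$-generic over $N$. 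For each $\alpha<\kappa$, the set of conditions deciding $\name{g}(\alpha)$ is dense open and belongs to $N$. Genericity of $q$ then gives $q\forces``\name{g}(\alpha)\in N"$. Hence $q$ forces the range of $\name{g}$ to be contained in $N\cap(\kappa^+)^V$, which is a set of size $\kappa$ and hence bounded in $\kappa^+$. This contradicts surjectivity. Cardinals below $\kappa$ and the cardinal $\kappa$ itself were handled in the first step.

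\emph{Main obstacle.} The substantive work is the appeal to Theorem \ref{th:iteration}(iii), which is already available. Beyond that, the only delicate point is one small check: we must verify that genericity in the sense of Definition \ref{def:B-kappa-properness} (namely $\DD\cap N$ predense above $q$ for every dense open $\DD\in N$) indeed forces the decided values into $N$. This holds because every condition in $\DD\cap N$ decides $\name{g}(\alpha)$ to be some value which, by elementarity, lies in $N$.
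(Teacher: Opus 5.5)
Your proposal is correct and follows the route the paper intends. The corollary is read off from Theorem \ref{th:iteration}(i) and (iii), exactly as in the ordinal case \cite[Cor. 8.2]{Property-B}: strong $(<\kappa)$-closure gives that no new sequences of length $<\kappa$ are added, so cardinals $\le\kappa$ survive, and $B$-$\kappa$-properness over a $(<\kappa)$-closed model $N$ of size $\kappa$ confines the range of $\name{g}$ to $N\cap(\kappa^+)^{\mathbf V}$.

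Two small adjustments are needed. First, put the witnessing sequence of orders $\langle\le_\zeta:\zeta<\kappa\rangle$ into $N$, as Definition \ref{def:B-kappa-properness} requires. Second, use the dense open set of conditions that either decide $\name{g}(\alpha)$ or are incompatible with $p$, since you only know that $p$ forces $\name{g}$ to be a function into $(\kappa^+)^{\mathbf V}$.
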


\begin{definition}\label{def:axiomA} In analogy with Definition \ref{not:F} with $\kappa=\aleph_0$, replacing $B$-$\kappa$-proper by Axiom A, we define what is an an absolutely definable Axiom A forcing. For such a forcing, we define an iteration of ${\mathbb P}$ along
a well-founded partial order $I$ analogously to Definition \ref{def:mixed-iteration}, but replacing ${\mathbb P}(\mathcal F)$ everywhere by ${\mathbb P}$.
$\eod_{\mbox{\tiny{Def.}}\ref{def:axiomA}}$
\end{definition}

\begin{theorem}\label{cor:aleph_0} Let $\mathbb P$ be an absolutely definable Axiom A forcing with strong fusion and $I$ a well-founded partial order. Then 
the iteration of ${\mathbb P}$ along $I$ preserves $\aleph_1$.
\end{theorem}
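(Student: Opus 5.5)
The plan is to rerun the proof of Theorem \ref{th:iteration} with $\kappa=\aleph_0$ and ${\mathbb P}$ in place of ${\mathbb P}(\mathcal F)$, and then deduce preservation of $\aleph_1$ from $B$-$\aleph_0$-properness. For $\kappa=\aleph_0$, $B$-$\aleph_0$-properness is ordinary properness plus a fusion clause. The models $N$ in Definition \ref{def:B-kappa-properness} are countable; the condition $N^{<\omega}\subseteq N$ is automatic. Axiom A supplies the orders $\le_n$, and strong $(<\aleph_0)$-closure is trivial: finite increasing sequences have their last element as an upper bound.

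The proof is by induction on the rank $\alpha^\ast$ of $I$, exactly as in Theorem \ref{th:iteration}. The inductive hypothesis is that for every proper initial segment $J$ of $I$, ${\mathbb P}_J$ is proper in the above sense. Absolute definability (Definition \ref{def:axiomA}) then guarantees that $\name{{\mathbb P}}_J$ is still interpreted as the same Axiom A forcing with strong fusion in $V^{{\mathbb P}_J}$, so the iteration is well-defined at every stage. The three parts then go as follows.

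Part (i) follows the cases of the earlier proof: successor rank via the two-step iteration, limit rank of countable cofinality by gluing along a cofinal $\omega$-sequence of initial segments, and uncountable cofinality by the countable-support bound. Part (ii), amalgamation, comes from properness of the ${\mathbb P}_J$ together with the Fusion Lemma \ref{lem:fusion-lem}, whose proof uses only strong fusion of the iterands and the rank recursion. Part (iii) is then Lemma \ref{lem:amalg-implies-proper}, which is stated for arbitrary iterands $\name{Q}_J$.

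The main obstacle is (ii): showing that, given $p$, a finite $C\subseteq I$, $n<\omega$ and a dense open $\DD$, we can find $q\in\DD$ with $p\le_{C,n} q$. At the single coordinate $j\in C$ of maximal rank I would use Axiom A of $\name{{\mathbb P}}_{[<j]}$, which yields a $\le_n$-extension deciding a countable maximal antichain for $\DD$ inside a countable model. Then I would pull back by induction on $|C|$ along the finitely many coordinates of $C$, ordered by rank, applying properness of ${\mathbb P}_{[<j]}$, available from the induction. This is Baumgartner's argument \cite[Th 7.1]{Ba}, with the linear order of coordinates replaced by a linear extension of the restriction of $I$ to the finite set $C$ and the rank function $\rho_I$ of Fact \ref{fact:rank}.

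Having shown ${\mathbb P}_I$ proper, $\aleph_1$ is preserved by the standard argument. If $\dot f$ were a name for a surjection from $\omega$ onto $\omega_1^V$, take a countable $N\prec\HH(\chi)$ containing $\dot f$ and an $N$-generic $q$. Then $q$ forces the range of $\dot f$ to be contained in $N\cap\omega_1$, which is a countable ordinal, a contradiction.
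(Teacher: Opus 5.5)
Your outline matches the paper's own proof: rerun Theorem \ref{th:iteration} with $\kappa=\aleph_0$, then apply Lemmas \ref{lem:fusion-lem} and \ref{lem:amalg-implies-proper}. However, the step you single out as the crux is false as stated. You want, for $p$, finite $C$, $n$ and dense open $\DD$, some $q\in\DD$ with $p\le_{C,n}q$. Take $I$ a single point and $\mathbb P$ Sacks forcing, or ${\mathbb P}(\FF)$ with the orders of Definition \ref{gSacks:def}(3). Take $n$ large enough that $q\ge_n p$ must keep every immediate successor of the stem $s[p]$; for the latter orders $n=2$ suffices. Then every such $q$ has stem exactly $s[p]$, so none lies in the dense open set $\DD=\{q:\dom(s[q])>\dom(s[p])\}$. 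What Axiom A gives at the top coordinate of $C$, and what your own description of that step says, is a $\le_n$-extension above which a \emph{countable subset} of $\DD$ is predense. It does not give membership in $\DD$. So your induction on $|C|$ is aimed at a statement that cannot hold. Appealing to Definition \ref{def:amalgamation} does not close this, since it is phrased in the same membership form.

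The same problem breaks your part (iii). Lemma \ref{lem:amalg-implies-proper} uses exactly this form: it chooses $p_{\zeta+1}\in\DD_\zeta\cap N$ with $p_\zeta\le_{C_\zeta,\zeta}p_{\zeta+1}$. If that were possible, the fusion limit would lie in every dense open set belonging to $N$. For $I$ a single point it would then have stems of every finite length, which is absurd.

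The repair is the form Baumgartner actually uses: find $q\ge_{C,n}p$ such that $\DD\cap N$ is predense above $q$. Proving this at the top coordinate $j\in C$ takes three steps:
\begin{enumerate}
\item Apply Axiom A of the iterand in $V^{{\mathbb P}_{[<j]}}$ to the projection of $\DD$.
\item For each member of the resulting countable antichain, choose witnesses on the coordinates outside $[\le j]$, and mix them into a single name.
\item Push the resulting ${\mathbb P}_{[<j]}$-name for a countable set down by an $\omega$-step fusion in ${\mathbb P}_{[<j]}$; one step of induction on $|C|$ is not enough.
\end{enumerate}
The fusion in Lemma \ref{lem:amalg-implies-proper} must then be rerun with ``$\DD_\zeta\cap N$ predense above $p_{\zeta+1}$'' in place of ``$p_{\zeta+1}\in\DD_\zeta\cap N$''. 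Since $p_{\zeta+1}$ is then no longer in $N$, the argument placing $\supt(p_{\zeta+1})$ inside $N\cap I$ must also be redone. Your final step, from $N$-generic conditions to preservation of $\aleph_1$, is fine.
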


\begin{proof} This proof follows the lines of the proof of Theorem \ref{th:iteration}, but is simpler. First we notice that for $\kappa=\aleph_0$, being 
$B$-$\kappa$-proper simply means satisfying Axiom A and that being strongly $(<\kappa)$-closed is trivially true for any forcing. Coming to part (ii) of the proof of Theorem \ref{th:iteration} and Definition \ref{def:F-n}, we make an analogous definition of fusion sequences for ${\mathbb P}$, simply using the orders 
$\langle \le_\zeta:\,\zeta<\omega_1\rangle$ that demonstrate the Axiom A property of $\mathbb P$. Lemma \ref{lem:fusion-lem} has literally the same proof, only replacing $\mathbb P$ for ${\mathbb P}(\mathcal F)$ everywhere. The same is true for Lemma \ref{lem:amalg-implies-proper}.
$\eop_{\ref{cor:aleph_0}}$
\end{proof}

We note that the common notions of arboreal forcing (Laver forcing, Miller forcing and so on) all satisfy the requirements of Theorem \ref{cor:aleph_0}.

\section{The Generic Function and Club Domination}\label{sec:applications} We discuss the generic objects added by ${\mathbb P}(\mathcal F^\ast)$ and give an application of the 
General Iteration Theorem. For $\kappa=\aleph_0$ many applications of various forcing notions to change the structure of ${}^{<\omega}\omega$ are known. In particular, the perfect set forcing satisfies properties stronger than the generalised perfect set forcing and we can refer the reader to \cite{zbMATH01335192} for the relevant theorems. There appears to be little existing work on iterations of the tree version of Grigorieff forcing. Thus, applying our results to the case $\kappa=\aleph_0$ may yield new results concerning cardinal invariants of the continuum. Apart from Observation \ref{obs:Hechler},
in this paper we concentrate on the case
$\kappa>\aleph_0$, with the requirement $\kappa^{<\kappa}=\kappa$ as before. We shall also fix $\mathcal F$ to be the filter $\FF^\ast$ of co-bounded subsets of
$\kappa$ since this case is representative. Note that this filter is absolutely definable (see Definition \ref{not:F}) by the formula 
\[
\FF^\ast=\{X\subseteq \kappa:\,|\kappa\setminus X|<\kappa\}.
\]
Recall the definition of the stem $s[p]$ of a condition $p\in {\mathbb P}({\mathcal F}^\ast)$ from \S\ref{sec:definitions}.  
It is proved in \cite[Lem 4.4]{Property-B} that if $G$ is a generic filter for ${\mathbb P}({\mathcal F}^\ast)$, then
\[
g=\bigcup\{s[p]:p \in G\}
\]
is a function from $\kappa\to\kappa$. This function is
{\em the generic function added by}  ${\mathbb P}({\mathcal F}^\ast)$. 

\begin{theorem}\label{what-added} The generic function $g$ added by  ${\mathbb P}({\mathcal F}^\ast)$  satisfies that
for every function $f\in
({}^{\kappa}\kappa)\cap {\mathbf V}$,  there is a club $C$ of $\kappa$ in ${\mathbf V}[G]$ 
such that for every $\alpha\in C$ we have $f(\alpha) <g(\alpha)$.
\end{theorem}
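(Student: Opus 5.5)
The plan is a density argument. Fix $f\in({}^\kappa\kappa)\cap\mathbf V$ and a condition $p\in{\mathbb P}(\FF^\ast)$. I will find $q\ge p$ and a name $\name{C}$ such that $q$ forces: $\name{C}$ is a club of $\kappa$ and $f(\alpha)<\name{g}(\alpha)$ for all $\alpha\in\name{C}$. Since $p$ is arbitrary, genericity then gives the theorem.

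\textbf{Construction of $q$.} Prune $p$ at its splitting nodes and nowhere else. Let $q$ consist of those $s\in p$ such that for every $i<\dom(s)$ with $s\rest i$ an $\FF^\ast$-splitting node of $p$, we have $s(i)>f(i)$.
\begin{itemize}
\item \emph{Splitting is preserved.} At a splitting node $t$ of $p$ with $\dom(t)=i$, the successor set $\{\beta:\,t\frown\beta\in p\}$ lies in $\FF^\ast$. Removing the bounded set $f(i)+1$ leaves a set still in $\FF^\ast$, because $\FF^\ast$ is the co-bounded filter. Non-splitting nodes keep their unique successor.
\item \emph{$q$ is a condition.} Every node of $q$ still has a (splitting) extension in $q$, and condition (o) still holds. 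The closure of $q$ holds because membership of $s$ in $q$ is a condition on $s\rest(i+1)$ for each $i$, so it passes to limits, and $p$ itself is closed.
\item \emph{Splitting histories are unchanged.} By (o), for $s\in q$ we have $i\in{\rm deg}_q(s)$ iff $s\rest i$ is splitting in $q$, iff $s\rest i$ is splitting in $p$. Hence ${\rm deg}_q(s)={\rm deg}_p(s)$, which is closed, so (i) holds.
\end{itemize}
Therefore $q\in{\mathbb P}(\FF^\ast)$ and $q\ge p$.

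\textbf{The club.} Work in $\mathbf V[G]$ with $q\in G$. Then every initial segment of $g$ lies in $q$, using \cite[Lem 4.4]{Property-B} and the fact that the stems of conditions in $G$ are initial segments of $g$ lying in those conditions. Let
\[
C=\bigcup_{\beta<\kappa}{\rm deg}_q(g\rest\beta).
\]
Equivalently, $C$ is the set of $\alpha$ such that $g\rest\alpha$ is a splitting node of $q$. By the construction, $g(\alpha)>f(\alpha)$ for every $\alpha\in C$.

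\begin{itemize}
\item \emph{Closed.} The sets ${\rm deg}_q(g\rest\beta)$ cohere: ${\rm deg}_q(g\rest\gamma)\cap\beta={\rm deg}_q(g\rest\beta)$ for $\beta<\gamma$. So $C\cap\beta$ is closed in $\beta$ for every $\beta$, and hence $C$ is closed.
\item \emph{Unbounded.} This is the step I expect to need the most care. Given $\gamma<\kappa$, consider the set of conditions $r\ge q$ whose stem has length $>\gamma$. This set is dense above $q$: extend the stem past $\gamma$ and shrink to the subtree of nodes compatible with a chosen node. Take such an $r\in G$. Then $s[r]\subseteq g$ is splitting in $r\subseteq q$. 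Is it splitting in $q$? It is, since $r\subseteq q$ and $s[r]$ has at least two successors in $q$, so by (o) its successor set in $q$ lies in $\FF^\ast$. Hence $\dom(s[r])\in C$ with $\dom(s[r])>\gamma$.
\end{itemize}

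The main points to check carefully are that the pruned tree $q$ is a genuine condition (closure together with (o)), and the density argument giving unboundedness of $C$. The rest is bookkeeping.
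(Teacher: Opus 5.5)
Your proposal is correct and is essentially the paper's proof: your pruned tree $q$ is exactly the paper's witness for the density of $\DD_f$, and your club of levels at which $g$ passes through a splitting node of $q$, closed by condition (i), is what the paper's enumeration $\langle i_\zeta:\,\zeta<\kappa\rangle$ is meant to list. The only real difference is in unboundedness: you use density of conditions with long stems, while the paper recursively picks the next splitting level above the previous one, which works because below the first splitting extension of a node the tree is linear. Your direct definition of $C$ also makes explicit that these nodes must be taken along $g$, which the paper's choice of $s_{\zeta+1}$ leaves implicit.
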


\begin{proof} Recall the notation for ${\rm deg}_p (s)$ from \S\ref{sec:definitions}.
We first show that the set 
\[
\DD_f=\{p\in {\mathbb P}({\mathcal F}^\ast):\,(\forall s\in p)(\forall i\in {\rm deg}_{p}(s))(s(i)>f(i))\}
\]
is dense in ${\mathbb P}({\mathcal F}^\ast)$. Given any $p\in {\mathbb P}({\mathcal F}^\ast)$, we attempt to construct $q\ge p$ with $q\in \DD_f$ as follows.
By induction on the height $\alpha$ of $s^\ast\in p$ we decide if $s^\ast\in q$. The induction hypothesis carried through the construction is that:
\begin{itemize}
\item  for every $t\in q$
of height $\beta<\alpha$, there is $s\in q$ of height $\beta+1$ with $s\rest\beta=t$,
\item for every $s\in p$ with $\dom(s)$ a limit ordinal $\delta<\alpha$, if for all $\beta<\delta$ we have $s\rest\beta\in q$, then $s\in q$,
\item for every $s\in q$, 
\[
{\rm deg}_q (s)=\{i < \dom(s):\,(\exists t\in q)\,[t\rest i =s\rest i \mbox{ and }t(i)\neq s(i)]\}={\rm deg}_p (s).
\]
\end{itemize}
If $\alpha$ is a limit ordinal, $s^\ast\in q$ iff $s^\ast\rest\beta\in q$ for all $\beta<\alpha$. Since $p\in {\mathbb P}({\mathcal F}^\ast)$, we have that for any
$s\in {}^{<\kappa}\kappa$ such that $s\rest\beta\in p$ for all $\beta<\alpha$, also $s\in p$. Hence the second item of the induction hypothesis on 
$q$ is preserved at this step.
If $\alpha=\beta+1$, if $t=s^\ast\rest\beta\in q$ and 
$s^\ast\rest\beta$ is not a splitting node of $p$, then let $s^\ast\in q$. Otherwise, the set $\{i<\kappa:\,t\!\frown\! i\in p\}\in \FF^\ast$. For $s\in p$ of the form
$t\!\frown\! i$ for some $i$, if  $i>f(\beta)$ we let $s\in q$, otherwise $s\notin q$. Notice that the first item of the induction hypothesis is preserved by this step. 
Also note that the construction is done so that in every step we have preserved the third item of the inductive hypothesis.

We now verify that $q\in {\mathbb P}({\mathcal F}^\ast)$.
Firstly, by induction on $\alpha<\kappa$ we verify that there exist $s\in q$ with $\dom(s)\ge\alpha$. If $\alpha=0$, this is trivial since $\langle\rangle\in q$.
If $\alpha=\beta+1$ let $t\in q$ with $\dom(t)=\beta$, which exists by the induction hypothesis.  By the defining property of $q$, we have that there is
$s\in q$ with $s\rest\beta=t$. Suppose then that $\alpha>0$ is a limit ordinal. By induction on $\beta<\alpha$ we choose an increasing sequence 
$\langle s_\beta:\,\beta<\alpha\rangle$ of elements in $q$ with $\dom(s_\beta)=\beta$. At $\beta$ a successor ordinal we do as in the above
case $\alpha=\beta+1$. For $\beta$ limit we choose $s_\beta=\bigcup_{\gamma<\beta} s_\gamma$, which is in $q$ by the second item in the inductive hypothesis of the construction of $q$. Once this construction is completed, it suffices to let $s=\bigcup_{\beta<\alpha} s_\alpha$.

Now we check that every $s\in q$ has a splitting extension in $q$. Let $t\supseteq s$ be the element of $p$ of the least height which is splitting in $p$.
Our construction at successor cases was done so that $t\in q$ and that $t$ is splitting in $q$, hence the requirement is satisfied. We similarly show that the set of splitting nodes is closed along every branch of $q$.

Note also that the construction of $q$ is such that every splitting is done into co-boundedly many extensions. Hence we conclude that $q\in {\mathbb P}({\mathcal F}^\ast)$. The definition of $q$ is done so that $q\in \DD_f$, hence this finishes the proof that $\DD_f$ is dense. Notice that $\DD_f$ is open.

Now let $G$ be a ${\mathbb P}({\mathcal F}^\ast)$-generic filter. We work in ${\mathbf V}[G]$ and show that 
\[
\{i<\kappa:\,f(i)<g(i)\}
\]
contains a club of $\kappa$. 

Let $q\in G\cap \DD_f$. First, we show that
$g$ is a branch of $q$. Fix an arbitrary $i<\kappa$ and 
let $r_i\in G$ be  such that $\dom(s[r_i])> i$. Since both $r_i$ and $q$ are in $G$, they are compatible by a condition in $G$, which we shall call $r$. It follows that $s[r]\in q$ and that $g \rest i= s[r]\rest i$. Therefore $g\rest i\in q$. Since $i$ was arbitrary, we obtain that $g\rest i\in q$ for all $i<\kappa$ and, hence, $g$ is a branch of $q$. By induction on $\zeta<\kappa$ we define an increasing sequence $\langle i_\zeta:\,\zeta<\kappa
\rangle$ of ordinals $<\kappa$ as follows.

Let $i_0=\dom(s[q])$ and $s_0=s[q]$. Given $i_\zeta$, $s_\zeta$, let $i_{\zeta+1}$ be the first $i>i_\zeta$ such that there exists $t\supseteq s_\zeta$ in $q$ which splits at level $i$. Let $s_{\zeta+1}$ be such a $t\frown j$, for some $j$ such that $t\frown j \in q$.
For $\zeta$ a limit ordinal $>0$, let $i_\zeta=\sup_{\xi<\zeta} i_\xi$. In particular, $C=\{i_\zeta:\,\zeta<\kappa\}$ is a club of $\kappa$. We claim that $g(i)>f(i)$
for every $i\in C$. We check this by induction on $\zeta$ such that $i=i_\zeta$.

For $\zeta=0$, we have that $g\rest i_0$ is the stem of $q$ and hence $\{j:\,g\rest i_0\!\frown \! \langle (i_0, j)\rangle  \in q \} \in \FF^\ast$ and by the choice of $q$ we have that 
every element of $\{j:\,g\rest i_0\!\frown\! \langle (i_0, j)\rangle \in q\}$ is $>f(i_0)$. Since $g\rest (i_0+1)$ is of the form $g\rest i_0\!\frown \! \langle (i_0, j)\rangle $ for some $j$ such that $g\rest i_0\!\frown\! \langle (i_0, j)\rangle \in q$, we conclude that $g(i_0)>f(i_0)$. A similar argument applies to ordinals
of the form $i_{\zeta+1}$. If $\zeta$ is a limit ordinal then we have that $g\rest({i_\zeta+1})\in q$ and $\{i_{\xi+1}:\,\xi<\zeta\}\subseteq i_\zeta$ are
splitting nodes of $g\rest({i_\zeta+1})$. By the requirement of the closure of splitting of every node, (b) in Definition \ref{gSacks:def}, we have that $i_\zeta$ is a splitting node of $g\rest (i_\zeta+1)$. Then we argue as in the successor case to conclude that $g(i_\zeta)>f(i_\zeta)$.
$\eop_{\ref{what-added}}$
\end{proof}

Theorem \ref{what-added} has applications regarding the possible structure of the subsets of $\kappa$ in the generic universe. Here is an example.

\begin{notation}\label{nota:club} For functions $f, g\in {}^{\kappa}\kappa$ we write $f<_{\rm{cl}}^\kappa g$ if $\{i<\kappa:\,f(i)< g(i)\}$ contains a club of $\kappa$.
\end{notation}

The relation $<_{\rm{cl}}^\kappa$ was first considered by Cummings and Shelah in \cite{zbMATH00819406}. They considered the bounding and the dominating number of the structure $({}^{\kappa}\kappa, <_{\rm{cl}}^\kappa)$. The latter one, denoted ${\mathfrak d}_{\rm{cl}}^\kappa$, is the one
relevant here. The following Theorem \ref{th:On-d} is an application of our main theorem Theorem \ref{th:iteration}, and it shows that the order $({}^{\kappa}\kappa, <_{\rm{cl}}^\kappa)$ may consistently be made to embed any well-founded partial order. This theorem was claimed by Cummings and Shelah \cite[Th. 1]{zbMATH00819406}. They used a generalisation of Hechler forcing to obtain the result, but unfortunately their technique of iteration along well-founded orders has a problem, by an argument similar to that of Observation \ref{lem:antichains-of-I}.

\begin{theorem}\label{th:On-d} Suppose that $\kappa=\kappa^{<\kappa}>\aleph_0$ and that $I$ is a well-founded partial order. Then there is a generic
extension preserving cardinals $\le\kappa^+$ in which there is an order preserving embedding $e:\,I\to ({}^{\kappa}\kappa, <_{\rm{cl}}^\kappa)$.
\end{theorem}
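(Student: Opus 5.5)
We will force with the iteration ${\mathbb P}_I$ of ${\mathbb P}(\FF^\ast)$ along $I$ with supports of size $\le\kappa$, as in Definition \ref{def:mixed-iteration}, and let $e(i)$ be the generic function $g_i$ added at coordinate $i$. The filter $\FF^\ast$ of co-bounded subsets is absolutely definable: any strongly $(<\kappa)$-closed forcing adds no new bounded subsets of $\kappa$, so the formula $|\kappa\setminus X|<\kappa$ defines the same filter. Hence the iteration is well-defined. By Theorem \ref{th:iteration} and Corollary \ref{cor:preserving-cardinals}, ${\mathbb P}_I$ is strongly $(<\kappa)$-closed and $B$-$\kappa$-proper, and preserves all cardinals $\le\kappa^+$.

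Next we verify that $e$ is order preserving. Fix $i<_I j$ and let $G$ be ${\mathbb P}_I$-generic. By Observation \ref{lem:projection-forcing}, $G_{[<j]}=G\cap{\mathbb P}_{[<j]}$ (via projection) is ${\mathbb P}_{[<j]}$-generic. Moreover, $G_{[\le j]}$ is generic for ${\mathbb P}_{[<j]}\ast\name{\mathbb P}(\FF^\ast)_{[<j]}$, so $g_j$ is ${\mathbb P}(\FF^\ast)$-generic over ${\mathbf V}[G_{[<j]}]$. Since $i\in[<j]$, we have $g_i\in{\mathbf V}[G_{[<j]}]$; indeed $g_i$ is computed from $G_{[\le i]}$, and $[\le i]$ is an initial segment of $[<j]$. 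Applying Theorem \ref{what-added} in the model ${\mathbf V}[G_{[<j]}]$ with $f=g_i$ gives a club $C\in{\mathbf V}[G_{[\le j]}]\subseteq {\mathbf V}[G]$ on which $g_i<g_j$. A club remains a club in the larger model, so $g_i<^\kappa_{\rm cl} g_j$ in ${\mathbf V}[G]$. We also need the hypothesis $\kappa^{<\kappa}=\kappa$ to hold in ${\mathbf V}[G_{[<j]}]$, where Theorem \ref{what-added} is applied. This follows from the $(<\kappa)$-closure of ${\mathbb P}_{[<j]}$: no new $(<\kappa)$-sequences are added, so ${}^{<\kappa}\kappa$ is unchanged.

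Finally, to have $e$ an embedding rather than merely order preserving, we check that incomparable or distinct coordinates give distinct functions, and ideally that $g_i\not<^\kappa_{\rm cl} g_j$ when $i\not<_I j$. A density argument handles this in the extension by $G_{[\le i]}$ together with $G_{[\le j]}$. Given a condition, extend the $j$-th coordinate so that its tree avoids the values of $g_i$ on a stationary set. This is delicate because $g_i$ is not in the model over which $g_j$ is generic. We will first try the weaker goal, order preservation in the sense $i<_I j\Rightarrow e(i)<^\kappa_{\rm cl}e(j)$, together with injectivity; that is what the statement requires. Injectivity follows from genericity: $g_j\notin{\mathbf V}[G_{[<j]}]$, so $g_j\neq g_i$ whenever $i<_I j$ or the two coordinates lie in different branches.

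The step we expect to be the main obstacle is the genericity transfer: checking that the component at $j$ of $G$ really is ${\mathbb P}(\FF^\ast)^{{\mathbf V}[G_{[<j]}]}$-generic over ${\mathbf V}[G_{[<j]}]$ in this non-linear iteration. The definition of ${\mathbb P}_{[\le j]}$ as a two-step iteration makes this formal, provided the padding map of Observation \ref{lem:projection-forcing} is a complete embedding.
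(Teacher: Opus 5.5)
Your main line is the paper's proof. You force with ${\mathbb P}_I$, put $e(j)=g_j$, and take preservation of cardinals $\le\kappa^+$ from Corollary \ref{cor:preserving-cardinals}. For $i<_I j$ you apply Theorem \ref{what-added} in ${\mathbf V}[G_{[<j]}]$ with $f=g_i$ and use that a club stays a club. Your checks that $\FF^\ast$ is absolutely definable and that $\kappa^{<\kappa}=\kappa$ holds in ${\mathbf V}[G_{[<j]}]$ supply details the paper leaves implicit. The paper proves only $i<_I j\Rightarrow g_i<^\kappa_{\rm cl}g_j$, so your injectivity paragraph is extra. For incomparable $i,j$ it is wrong: $g_j\notin{\mathbf V}[G_{[<j]}]$ does not give $g_j\neq g_i$, because $g_i$ is in general not in ${\mathbf V}[G_{[<j]}]$, as you remark yourself a few lines earlier. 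For $i<_I j$ nothing extra is needed, since $<^\kappa_{\rm cl}$ is irreflexive. For incomparable $i,j$ you would need $g_j$ to be generic over a model containing $g_i$, such as ${\mathbf V}[G_{[<j]\cup[\le i]}]$.

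That is not available in the paper's framework. The same reason undermines the genericity transfer you flag as the main obstacle, which the paper's proof takes for granted exactly as you do, via Observation \ref{lem:projection-forcing}. Under Definition \ref{def:mixed-iteration}, when $J$ has maximal elements, a condition of ${\mathbb P}_J$ is trivial outside $[\le m]$ for a single maximal element $m$ of $J$. Two conditions are comparable only if they lie in the same ${\mathbb P}_{[\le m]}$; this is what the proof of Theorem \ref{th:iteration}(i) and Observation \ref{lem:antichains-of-I} use. Apply this to $K=[\le i]\cup[\le j]$ with $i,j$ incomparable: no condition is nontrivial at both coordinates, so in ${\mathbf V}[G]$ at most one of $g_i,g_j$ is even a total function. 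Already for the antichain $I=\{i,j\}$, ${\mathbb P}_I$ is two copies of ${\mathbb P}(\FF^\ast)$ glued at the trivial condition, and the padding of ${\mathbb P}_{\{i\}}$ is not a complete embedding. So the complete-embedding clause you rely on, and with it the totality of $e$, fails as soon as $I$ has two incomparable elements. Making incomparable coordinates compatible instead would bring in the $\le\kappa$-support products that, as recalled in \S\ref{sec:difficcult}, can collapse $\kappa^+$. This defect is inherited from the paper's argument rather than introduced by you, but your proof does not get past it.
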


\begin{proof} Under the assumptions, consider the generic extension ${\mathbf V}[G]$ obtained by forcing by the limit ${\mathbb P}_I$ of the iteration
with supports of size $\le\kappa$ given by
\[
\langle ({\mathbb P}_J,  \name{{\mathbb P}}({\mathcal F}^\ast)_J):\,J \mbox{ a proper initial segment of }I\rangle.
\]
For each $j\in I$ let $g_j$ be the generic
function added by $\name{{\mathbb P}}({\mathcal F}^\ast)_{<j}$ when forcing over ${\mathbf V}^{\mathbb P}_{[<j]}$. Note that if $i<j$ holds in 
$I$, then by the definition of the iteration we have that $g_i\in {\mathbf V}^{\mathbb P}_{[<j]}$.
Theorem \ref{what-added} shows that then we have that $g_i <_{\rm{cl}}^\kappa g_j$ holds in ${\mathbf V}^{\mathbb P}_{[\le j]}$.
Since the relation $g_i <_{\rm{cl}}^\kappa g_j$ is upward absolute to forcing, we have that $g_i <_{\rm{cl}}^\kappa g_j$ holds in ${\mathbf V}[G]$
as well.
$\eop_{\ref{th:On-d}}$
\end{proof}

\begin{observation}\label{obs:Hechler}
Applying the technique of the proof of theorem \ref{th:On-d} with $\kappa=\aleph_0$ gives a proof of a theorem somewhat generously attributed to Stephen Hechler
\cite{zbMATH03510298}, namely that it is consistent for the structure of $({}^\omega \omega, <^\ast)$ to isomorphically embed any well-founded partial order given in advance.
\end{observation}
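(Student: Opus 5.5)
The plan is to mirror the proof of Theorem \ref{th:On-d}, with $\kappa=\aleph_0$, Hechler-style domination mod finite in place of club domination, and Theorem \ref{cor:aleph_0} in place of Theorem \ref{th:iteration}. Since Hechler forcing is not an arboreal Axiom A forcing of the required kind, the forcing to iterate is $\mathbb P(\FF^\ast_{\aleph_0})$, the tree version of Grigorieff forcing with the Fr\'echet (co-finite) filter. For $\kappa=\aleph_0$ the club requirements in Definition \ref{gSacks:def} trivialise, so the conditions are the subtrees of ${}^{<\omega}\omega$ in which every node has a splitting extension and every splitting node has cofinitely many successors. This filter is absolutely definable by the same formula as in \S\ref{sec:applications}, and in fact for countable $\kappa$ the membership ``$\omega\setminus X$ is finite'' does not depend on the model at all. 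The forcing satisfies Axiom A with the fusion orders $\le_n$ from Definition \ref{gSacks:def}(3), and it has strong fusion via intersections, by the $\kappa=\aleph_0$ case of Fact \ref{fact:properties-of-individual-forcing}.

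Given a well-founded $I$, I would form the iteration along $I$ as in Definition \ref{def:axiomA}. Theorem \ref{cor:aleph_0} then shows that $\aleph_1$ is preserved. For each $j\in I$ let $g_j$ be the generic function added at coordinate $j$ over ${\mathbf V}^{\mathbb P_{[<j]}}$. The key step is the countable analogue of Theorem \ref{what-added}: for every $f\in{}^\omega\omega$ of the intermediate model, $f<^\ast g_j$. I would prove this with the same dense set $\DD_f$, obtained by pruning every splitting node $t$ at level $i$ to the successors $t\frown k$ with $k>f(i)$. Cofinitely many successors survive, so the pruned tree is still a condition.

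The step I expect to be the main obstacle is the density argument for eventual domination. Unlike in Theorem \ref{what-added}, it gives $g_j(i)>f(i)$ only on the \emph{splitting levels} along $g_j$, which form an infinite set but need not be cofinite. To obtain $f<^\ast g_j$ I would strengthen the density argument. First I would extend any condition to one in which every node above the stem is splitting, by re-indexing the levels, or by passing to a tree in which the splitting levels of every branch are consecutive. Then I would prune as before. If this fails, I would replace $g_j$ by the derived function $\tilde g_j(n)=g_j(i_n)$, where $i_n$ is the $n$-th splitting level of the generic branch in a condition of the generic filter, and check that this function dominates the ground model.

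Granting the key step, I would finish exactly as in the proof of Theorem \ref{th:On-d}. If $i<j$ in $I$, then $g_i\in{\mathbf V}^{\mathbb P_{[<j]}}$, so $g_i<^\ast g_j$. This relation is arithmetical in the two functions and hence absolute between models, so it persists to the final model. Therefore $j\mapsto g_j$ is an order-preserving embedding of $I$ into $({}^\omega\omega,<^\ast)$.
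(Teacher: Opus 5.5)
You follow the paper's route: iterate $\mathbb P(\FF^\ast)$ for $\kappa=\aleph_0$ along $I$, obtain $\aleph_1$ from Theorem \ref{cor:aleph_0}, and use the countable case of Theorem \ref{what-added}. You also correctly locate where this breaks. A club of $\omega$ is just an infinite set, so Theorem \ref{what-added} at $\kappa=\aleph_0$ only gives $f(i)<g_j(i)$ for infinitely many $i$, and the paper's one-line appeal to Theorem \ref{th:On-d} does not address this. However, neither of your repairs works, and the key step $f<^\ast g_j$ is in fact false.

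Strengthening a condition means passing to a subtree. So a node with a unique successor in $p$ keeps a unique successor in every $q\ge p$, and splitting can never be added. Concretely, let the $j$-th coordinate of $p$ be the tree that splits into all of $\omega$ at nodes of even length and has only the successor $s\frown 0$ at nodes $s$ of odd length. Then no extension splits everywhere above its stem. Moreover $p\forces g_j(2n+1)=0$ for all $n$, so $g_j$ does not even eventually dominate the constant function $1$. Re-indexing levels does not produce a condition; it changes the function, which is your second repair.

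That second repair, $\tilde g_j(n)=g_j(i_n)$, is not well defined. The splitting levels along $g_j$ depend on which condition of $G$ you use, and they thin out as conditions get stronger. More fundamentally, at $\kappa=\aleph_0$ the forcing $\mathbb P(\FF^\ast)$ is a cofinite-splitting Miller forcing, and like Miller forcing it adds no dominating real. To see this, take $p$ and a name $\dot x$ for a real, and handle nodes one at a time by bookkeeping:
\begin{enumerate}
\item above the current node $t$, pick a condition $r'$ stronger than the current tree above $t$ that decides $\dot x(m)$ for a fresh $m$;
\item let $y(m)$ be the decided value;
\item keep only the path from $t$ to the stem $u$ of $r'$, together with all (cofinitely many) successors of $u$ in $r'$, which become the next nodes to handle.
\end{enumerate}
The result is a condition $q\ge p$ and some $y\in\mathbf V$ with $q\forces\exists^\infty m\,\dot x(m)\le y(m)$. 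So no real added at coordinate $j$ dominates ${}^\omega\omega\cap\mathbf V^{\mathbb P_{[<j]}}$, and no choice of derived function can rescue the step.

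The repair is to change the forcing, not the function. Use Laver forcing, which the paper lists among the forcings satisfying the hypotheses of Theorem \ref{cor:aleph_0}. In a Laver condition every node above the stem has infinitely many successors. Pruning each such node $t$ to the successors $t\frown k$ with $k>f(\dom(t))$ therefore gives a condition forcing $f(i)<g_j(i)$ for all $i$ beyond the stem, that is $f<^\ast g_j$. Your absoluteness argument for $<^\ast$ then applies.

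Note also that your last step needs $g_j$ to exist for every $j\in I$. Read literally, Definition \ref{def:mixed-iteration} makes $\mathbb P_J$ a lottery over the maximal elements of $J$; this is exactly what Observation \ref{lem:antichains-of-I} uses. Applying that clause to $[\le a]\cup[\le b]$ for incomparable $a,b$ shows that every condition has linearly ordered support. Hence the generic adds $g_j$ only along a single chain of $I$. For non-linear $I$ you would additionally need an iteration in which incomparable coordinates are forced simultaneously, and the definition as written does not give one.
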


In view of further possible applications, in \S\ref{sec:double+} we discuss the preservation of cardinals $>\kappa+$. This turns out to be difficult and inconclusive, as far as we are concerned. 

\section{The chain condition of the partial orders ${\mathbb P}_I$ and the problematic history of preservation theorems}\label{sec:double+}
We discuss the possible preservation of cardinals $>\kappa^+$ by orders of the type ${\mathbb P}_I$, as studied above. To set a context in which a preservation result
might be possible, we assume that GCH holds in ${\mathbf V}$. 
In \cite[Lem. 4.12(3)]{Property-B}, it is shown that under these assumptions
${\mathbb P}(\mathcal F^\ast)$ has size $\kappa^+$ and, in particular, that it satisfies $\kappa^{++}$-cc and preserves all cardinals $\ge\kappa^{++}$. 
We first note, in Lemma \ref{lem:not-too-long} 
and Corollary \ref{cor:why-collapse} below, that there are some necessary conditions on $I$ even to attempt a preservation result via the iteration.

\begin{lemma}\label{lem:not-too-long}  There is an antichain of size $2^\kappa$ in 
${\mathbb P}({\mathcal F}^\ast)$.
\end{lemma}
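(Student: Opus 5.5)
We build the antichain by indexing conditions with branches of a tree. For each function $f\in{}^\kappa\kappa$, or more conveniently each $x\in{}^\kappa 2$, we define a condition $p_x\in{\mathbb P}({\mathcal F}^\ast)$ so that distinct $x,y$ give incompatible $p_x,p_y$. Since $|{}^\kappa 2|=2^\kappa$, this is enough.

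Two conditions $p,q$ are compatible iff $p\cup q$ (i.e.\ $p\cap q$ as trees) contains a condition. It therefore suffices to make $p_x\cap p_y$ contain no ${\mathcal F}^\ast$-perfect subtree. The simplest way is to make $p_x\cap p_y$ have bounded height.

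The first idea is to encode $x$ into the stem. Fix a partition $\kappa=\bigcup_{\beta<\kappa}A_\beta$ into $\kappa$ pieces, each of size $\kappa$ and hence co-bounded relative to nothing. This partition fails directly, since a set of size $\kappa$ need not be in ${\mathcal F}^\ast$. So we instead code $x$ along the splitting levels, by controlling which successors are allowed. Co-bounded splitting leaves no room to restrict successor sets, so we code via the \emph{non-splitting} nodes.

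Concretely, let $p_x$ be the tree of all $s\in{}^{<\kappa}\kappa$ such that for every even ordinal $\alpha<\dom(s)$ (successor-parity, with limits counted even), $s(\alpha)$ is arbitrary, and for every odd $\alpha=\gamma\cdot 2+1$ we have $s(\alpha)=x(\gamma)$. Then we check that $p_x$ is a condition. It is closed because the constraints are pointwise. Every node is either a full splitting, with all of $\kappa$ as successors, at even levels, or has a unique successor, at odd levels, so (o) holds. The splitting history ${\rm deg}_{p_x}(s)$ is the set of even ordinals below $\dom(s)$. This set is closed, since limits are even, so (i) holds. Every node has a splitting extension, and the height is $\kappa$. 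If $x\neq y$, let $\gamma$ be least with $x(\gamma)\neq y(\gamma)$. Then no node of $p_x\cap p_y$ has domain $>\gamma\cdot2+1$. So $p_x\cap p_y$ has height $<\kappa$, it contains no condition, and $p_x\perp p_y$.

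The main point to check carefully is the closure requirement (i). Its verification relies on the parity convention: the splitting levels must form a closed set, which is why limit levels must be splitting. A second point is that incompatibility in this forcing really means that the intersection contains no condition. This follows because $p\le q$ is reverse inclusion and any common extension lies inside $p\cap q$. The argument uses no assumption beyond $\kappa^{<\kappa}=\kappa$ and gives $2^\kappa$ pairwise incompatible conditions.
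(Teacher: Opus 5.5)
Your proof is correct, and it takes a genuinely different route from the paper's.

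The paper's construction is as follows. It takes a family $\langle A_i:\,i<2^\kappa\rangle$ of pairwise almost disjoint sets in $[\kappa]^\kappa$; this is where it uses $\kappa^{<\kappa}=\kappa$. It then lets $p_i$ split at every node with successor set exactly $A_i$, and relies on $A_i\cap A_j\notin\FF^\ast$ for incompatibility.

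That construction does not go through, for exactly the reason you point out in your second paragraph. By (o), a successor set of size $\kappa$ must belong to $\FF^\ast$. Two members of $\FF^\ast$ meet in a member of $\FF^\ast$, so at most one $p_i$ is a condition.

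More generally, no family of everywhere-splitting conditions can be an antichain. If $p$ and $q$ both split at every node into sets in $\FF$, then $p\cap q$ again splits at every node into a set in $\FF$. It inherits closure from $p$ and $q$, and (i) holds trivially, so $p\cap q$ is a common extension. Incompatibility therefore has to be created at nodes where one of the trees does not split. That is precisely what your coding of $x(\gamma)$ into the non-splitting levels achieves.

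Your route also buys more than the paper's would. It needs no almost disjoint family and no cardinal arithmetic. Since full splitting only uses $\kappa\in\FF$, it works verbatim for ${\mathbb P}(\FF)$ with any $(<\kappa)$-complete filter, and also for $\kappa=\aleph_0$.

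Two small corrections.
\begin{enumerate}
\item In ordinal arithmetic $\gamma\cdot 2+1=\gamma+\gamma+1$ does not enumerate the odd ordinals. For instance, $\omega+1$ is odd but $\gamma+\gamma=\omega$ has no solution. Write $2\cdot\gamma+1$ instead; then the coding levels are exactly the odd levels, as your verification of (o) and (i) assumes.
\item In your compatibility remark you mean $p\cap q$, not $p\cup q$.
\end{enumerate}
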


\begin{proof} By the assumption that $\kappa^{<\kappa}=\kappa$, there exists a family $\langle A_i:\,i<2^{\kappa}\rangle$ of elements of $[\kappa]^\kappa$ such that whenever $i<j$ then $|A_i\cap A_j|<\kappa$. In particular, $A_i\cap A_j\notin \FF^\ast$ for $i\neq j$. (For the existence of such a family see \cite[Th. II1.2]{Kunen}). For each $i$ let $p_i$ be the perfect subtree of ${}^{<\kappa}\kappa$ which splits at every point and where the splittings at each point are exactly the elements of $A_i$. Then each $p_i\in {\mathbb P}({\mathcal F}^\ast)$, but for $i\neq j$ the conditions $p_i$ and $p_j$
are incompatible.
$\eop_{\ref{lem:not-too-long}}$
\end{proof}

\begin{corollary}\label{cor:why-collapse} If ${\mathbb P}({\mathcal F}^\ast)$ satisfies $\kappa^{++}$-cc, then $2^\kappa=\kappa^+$. In particular,
if the iteration of ${\mathbb P}({\mathcal F}^\ast)$ with supports of size $\le\kappa$ over a model of GCH along $\kappa^{++}$ preserves $\kappa^{++}$,
then the continuation of that iteration to have the length $\kappa^{++}+1$
does not satisfy $\kappa^{++}$-cc. 
\end{corollary}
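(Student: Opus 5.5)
The first assertion follows directly from Lemma~\ref{lem:not-too-long}, and the second from applying the same argument inside the intermediate model $\mathbf V^{{\mathbb P}_{\kappa^{++}}}$. For the first: Lemma~\ref{lem:not-too-long} gives an antichain of size $2^\kappa$ in ${\mathbb P}({\mathcal F}^\ast)$. If ${\mathbb P}({\mathcal F}^\ast)$ is $\kappa^{++}$-cc, every antichain has size $<\kappa^{++}$, so $2^\kappa\le\kappa^+$. Combined with Cantor's theorem $2^\kappa>\kappa$, this gives $2^\kappa=\kappa^+$.

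For the second assertion, let ${\mathbb P}_{\kappa^{++}}$ be the iteration along the ordinal $\kappa^{++}$ and let $G$ be generic. I will work in $\mathbf V[G]$ and establish three things there.

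\begin{enumerate}
\item $\kappa^{<\kappa}=\kappa$ still holds. By Theorem~\ref{th:iteration}(i) the forcing is strongly $(<\kappa)$-closed, so it adds no new sequences of length $<\kappa$.
\item ${\mathcal F}^\ast$ is the same filter as in $\mathbf V$, by absolute definability (Definition~\ref{not:F}).
\item $2^\kappa\ge\kappa^{++}$. For each $\alpha<\kappa^{++}$, the generic function $g_\alpha$ added at stage $\alpha$ is not in $\mathbf V^{{\mathbb P}_\alpha}$, since the sets $\DD_f$ of Theorem~\ref{what-added}, or simply genericity, force $g_\alpha\neq f$ for every $f$ in the earlier model. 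Hence the $g_\alpha$ are pairwise distinct, and this yields $\kappa^{++}$ many subsets of $\kappa$. By hypothesis $\kappa^{++}$ is preserved, so this is a genuine lower bound $2^\kappa\ge(\kappa^{++})^{\mathbf V[G]}$.
\end{enumerate}

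Applying Lemma~\ref{lem:not-too-long} inside $\mathbf V[G]$, which is legitimate by the first two items, the first part of the corollary gives that $\name{{\mathbb P}}({\mathcal F}^\ast)_{\kappa^{++}}$ has an antichain of size $\ge\kappa^{++}$ there.

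Finally, ${\mathbb P}_{\kappa^{++}+1}={\mathbb P}_{\kappa^{++}}\ast\name{{\mathbb P}}({\mathcal F}^\ast)_{\kappa^{++}}$ by Definition~\ref{def:mixed-iteration}. I will invoke the standard fact that if a two-step iteration ${\mathbb P}\ast\name{Q}$ is $\lambda$-cc for a regular $\lambda$ which ${\mathbb P}$ preserves, then ${\mathbb P}\forces$``$\name{Q}$ is $\lambda$-cc''. To see this, suppose some $p$ forces $\langle\name{q}_\xi:\xi<\lambda\rangle$ to be an antichain in $\name{Q}$. Then $\{(p,\name{q}_\xi):\xi<\lambda\}$ is an antichain of size $\lambda$ in ${\mathbb P}\ast\name{Q}$, because any common extension $(r,\name{s})$ of two of its members would have $r\ge p$ forcing $\name{s}$ to extend two incompatible $\name{q}$'s. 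Taking $\lambda=\kappa^{++}$, which is preserved and hence still regular, gives the contradiction with the previous paragraph, so ${\mathbb P}_{\kappa^{++}+1}$ is not $\kappa^{++}$-cc.

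The step needing the most care is the computation $2^\kappa\ge\kappa^{++}$ in $\mathbf V[G]$. Two points there require justification: that the stage-$\alpha$ generic is genuinely new over $\mathbf V^{{\mathbb P}_\alpha}$, and that the preservation hypothesis on $\kappa^{++}$ is exactly what makes the count land on the correct cardinal.
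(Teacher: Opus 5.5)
Your proposal is correct and follows the same route as the paper's proof. By Theorem~\ref{what-added}, the iteration along $\kappa^{++}$ adds pairwise distinct new generic functions, so $2^\kappa\ge\kappa^{++}$ in the extension once $\kappa^{++}$ is preserved, and Lemma~\ref{lem:not-too-long} applied there yields an antichain of size $\kappa^{++}$ in the last iterand; you additionally spell out three points the paper leaves implicit: that $\kappa^{<\kappa}=\kappa$ survives in $\mathbf V[G]$ so the lemma may be invoked there, that only the lower bound on $2^\kappa$ is needed, and that an antichain in the final iterand pulls back to an antichain of ${\mathbb P}_{\kappa^{++}+1}$ in the ground model.
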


\begin{proof} By Theorem \ref{what-added}, an iteration of ${\mathbb P}({\mathcal F}^\ast)$ with supports of size $\le\kappa$ over a model of GCH along $\kappa^{++}$ forces $2^\kappa=\kappa^{++}$. If such an iteration does not collapse $\kappa^{++}$, then by Lemma \ref{lem:not-too-long}, forcing over the resulting model with one more copy of ${\mathbb P}({\mathcal F}^\ast)$ no longer satisfies the
$\kappa^{++}$-cc.
$\eop_{\ref{cor:why-collapse}}$
\end{proof}

\begin{observation}\label{lem:antichains-of-I} If $I$ itself has a set of incomparable elements of size $\kappa^{++}$, then the iteration of 
${\mathbb P}({\mathcal F}^\ast)$ along $I$ has an antichain of size $\kappa^{++}$.
\end{observation}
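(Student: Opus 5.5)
The plan is to exploit the clause of Definition \ref{def:mixed-iteration} that governs initial segments with several maximal elements. Let $A\subseteq I$ be a set of pairwise incomparable elements with $|A|=\kappa^{++}$. I will work in a single initial segment $J$ of $I$ in which every element of $A$ is maximal, namely
\[
J=\bigcup_{a\in A}[\le a].
\]
This $J$ is downward closed, so it is an initial segment of $I$. Each $a\in A$ is maximal in $J$: if $a<x$ with $x\in J$, then $x\le b$ for some $b\in A$, hence $a<b$, contradicting incomparability.

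For each $a\in A$ pick a non-trivial condition $p_a\in{\mathbb P}_J$ living on $[\le a]$. The simplest choice is $p_a\rest[<a]=\emptyset_{{\mathbb P}_{[<a]}}$ and $p_a(a)$ a canonical name for a tree in ${\mathbb P}(\FF^\ast)$ that is not the maximal tree ${}^{<\kappa}\kappa$, padded by $\emptyset$ elsewhere. By the description of ${\mathbb P}_J$ in the maximal-elements case, $p_a\in{\mathbb P}_J$. By the definition of the ordering $\le_J$ in the same case, two conditions are comparable only if both lie in ${\mathbb P}_{[\le j]}$ for a common maximal $j$.

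I then argue that for $a\neq b$ in $A$ the conditions $p_a$ and $p_b$ are incompatible in ${\mathbb P}_J$. Suppose $r\ge p_a,p_b$. Then $r$ and $p_a$ share a maximal $j$ with $p_a\in{\mathbb P}_{[\le j]}$. Since $p_a(a)$ is non-trivial and $a$ is maximal, this forces $j=a$; the same reasoning forces $j=b$, which is a contradiction. Thus $\{p_a:a\in A\}$ is an antichain of size $\kappa^{++}$ in ${\mathbb P}_J$.

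Finally, transfer the antichain to ${\mathbb P}_I$ via Observation \ref{lem:projection-forcing}: padding is a complete embedding, so it preserves incompatibility. The main obstacle is the middle step, pinning down that a condition which is non-trivial at $a$ cannot belong to ${\mathbb P}_{[\le b]}$ for $b\neq a$. This needs the convention that membership in ${\mathbb P}_{[\le j]}$ means trivial coordinates outside $[\le j]$. I would verify this directly from the bullet defining ${\mathbb P}_J$, and, if necessary, choose $p_a(a)$ so that $\emptyset\Vdash p_a(a)\neq\emptyset$ holds outright.
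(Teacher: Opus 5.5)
Your proposal is correct and is essentially the paper's argument. The paper likewise takes, for each of the $\kappa^{++}$ pairwise incomparable points, the condition whose support is that single point and whose value there is a name for a fixed non-trivial tree (it uses the tree splitting at every node into $\kappa\setminus\{0\}$), and then simply asserts that these conditions form an antichain in ${\mathbb P}_I$. Your detour supplies the verification the paper leaves implicit: you pass through $J=\bigcup_{a\in A}[\le a]$, use the maximal-elements clause of the ordering, and transfer the antichain to ${\mathbb P}_I$ by padding via Observation \ref{lem:projection-forcing}. In the incompatibility step, the real point is the one you flag: a common extension $r$ inherits non-triviality at $a$ from $p_a$, so it cannot lie in ${\mathbb P}_{[\le b]}$.
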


\begin{proof} Suppose that $\{j_\gamma:\,\gamma<\kappa^{++}\}$ is a set of incomparable elements in $I$. Let $p$ be the condition in ${\mathbb P}({\mathcal F}^\ast)$ that assigns to every node the set of splitting points equal to $\kappa\setminus\{0\}$. For every $\gamma$ let $\name{p}_\gamma$
be a ${\mathbb P}_{[<j_\gamma]}$-name for $p$. Let $r_\gamma\in {\mathbb P}_{[\le j_\gamma]}$ be the condition whose support is $\{j_\gamma\}$, where it has value $\name{p}_\gamma$. Then $\{r_\gamma:\,\gamma<\kappa^{++}\}$ is an antichain in ${\mathbb P}_I$.
$\eop_{\ref{lem:antichains-of-I}}$
\end{proof}

These considerations imply that if we wish to preserve $\kappa^{++}$ when forcing by ${\mathbb P}_I$, we must restrict our attention to those $I$ that 
avoid the obvious reasons for failing $\kappa^{++}$-cc listed above. Hence we must concentrate on the following kind of partial orders. 

\begin{definition}\label{def:kappa-plusplus} We call a partial order $I$ a {\em $\kappa^{++}$-avatar} if it satisfies the following three conditions:
\begin{itemize}
\item $I$ is well-founded,
\item every set of pairwise incomparable elements of $I$ has size $\le\kappa^+$ and
\item every proper segment of $I$ has $\le\kappa^+$ proper initial segments.
$\eod_{\mbox{\tiny{Def.}} \ref{def:kappa-plusplus}}$
\end{itemize}
\end{definition}

\begin{example}\label{ex:avatars} The following are examples of $\kappa^{++}$-avatars:
\begin{enumerate}
\item $\kappa^{++}$,
\item any well-founded order of size $\le\kappa^+$,
\item a disjoint union of any well-founded order of size $\le \kappa^+$ and $\kappa^{++}$ or a lexicographic sum of these.
\end{enumerate}
\end{example}

\begin{observation}\label{obs:avatar} The notion of being a $\kappa^{++}$-avatar is closed under unions of size $\le\kappa^+$.
\end{observation}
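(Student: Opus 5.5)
We read ``union'' as follows: $I$ is a partial order whose underlying set is $\bigcup_{\nu<\lambda} I_\nu$ with $\lambda\le\kappa^+$, and each $I_\nu$ carries the induced order, is an initial segment of $I$, and is a $\kappa^{++}$-avatar. This covers the disjoint unions in Example \ref{ex:avatars}(3) and the increasing unions of initial segments. The plan is to check the three clauses of Definition \ref{def:kappa-plusplus} for $I$ one at a time.

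\emph{Well-foundedness.} Let $X\subseteq I$ be nonempty. Choose $\nu$ with $X\cap I_\nu\neq\emptyset$ and let $x$ be a minimal element of $X\cap I_\nu$, using that $I_\nu$ is well-founded. If $y<_I x$ and $y\in X$, then $y\in I_\nu$ because $I_\nu$ is downward closed, which contradicts the minimality of $x$. So $x$ is minimal in $X$. This is exactly where downward closure of the pieces is used; for disjoint unions it holds trivially, since there are no relations between distinct pieces.

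\emph{Antichains.} If $A\subseteq I$ is a set of pairwise incomparable elements, then $A=\bigcup_{\nu<\lambda}(A\cap I_\nu)$. Each $A\cap I_\nu$ is an antichain of $I_\nu$, so $|A\cap I_\nu|\le\kappa^+$. Hence $|A|\le\lambda\cdot\kappa^+=\kappa^+$.

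\emph{Counting initial segments.} This is the step I expect to be the main obstacle. Let $S$ be a proper initial segment of $I$. The map $T\mapsto\langle T\cap I_\nu:\,\nu<\lambda\rangle$ is injective on initial segments $T$ of $S$, and each $T\cap I_\nu$ is an initial segment of $S\cap I_\nu$. A naive count therefore only gives the bound $(\kappa^+)^{\kappa^+}$, which is too large, for two reasons: the product has $\kappa^+$ coordinates, and $S\cap I_\nu$ may equal all of $I_\nu$, in which case the hypothesis does not bound its number of initial segments. I would first try to show that $S$, being proper, is already contained in a single piece $I_{\nu_0}$, or in a union of $\le\kappa$ many of the pieces:
\begin{itemize}
\item For an increasing chain $\langle I_\nu\rangle$, I would look for an element $x\in I\setminus S$ of minimal rank (via Fact \ref{fact:rank}) and argue that $S$ lies below the stage at which the minimal elements of $I\setminus S$ appear.
\item For disjoint unions, $S\cap I_\nu$ is proper in $I_\nu$ for at least one $\nu$, and the question becomes how many coordinates can vary.
\end{itemize}
If this reduction succeeds, then $S\subseteq I_{\nu_0}$ is a proper initial segment of an avatar, and the bound $\kappa^+$ follows directly from the hypothesis on $I_{\nu_0}$. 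If it fails, I would strengthen the hypothesis, for instance by requiring that the family be increasing with every proper initial segment of $I$ contained in some $I_\nu$, and then conclude by the same argument. I would also test the statement against disjoint unions of $\kappa^+$ copies of $\kappa^{++}$, where the count seems genuinely dangerous, and record any such restriction explicitly.
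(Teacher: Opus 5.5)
Your arguments for well-foundedness and for antichains are correct. The third clause, which you leave open, is the gap, and it cannot be closed: with Definition \ref{def:kappa-plusplus} read literally, the statement is false, so no reduction of the kind you sketch can succeed. Take $I=\kappa^{++}\sqcup\{\ast\}$. This is a union in your sense of the two initial segments $\kappa^{++}$ and $\{\ast\}$, and each of these is a $\kappa^{++}$-avatar. Now $S=\kappa^{++}$ is a proper initial segment of $I$. Its proper initial segments are the ordinals $\alpha<\kappa^{++}$, so it has $\kappa^{++}$ of them, and $I$ is not an avatar.

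This is exactly your dangerous case $S\cap I_\nu=I_\nu$. It needs only two pieces, not $\kappa^+$ copies of $\kappa^{++}$. Similarly, $\kappa^+$ singletons unite to an antichain $A$ for which $A\setminus\{a\}$ has $2^{\kappa^+}$ proper initial segments. The first example also refutes Example \ref{ex:avatars}(3). The paper states the observation without proof, so the defect lies in the statement rather than in your method.

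There are two ways to recover a true statement.
\begin{itemize}
\item \emph{Pointwise reading of the third clause.} Read it as: for every $i\in I$, the segment $[<i]$ has at most $\kappa^+$ proper initial segments. This still excludes $\kappa^{++}+1$, the case that motivates the definition via Corollary \ref{cor:why-collapse}. Under this reading your downward-closure idea finishes the proof at once. For $i\in I_\nu$, the set $[<i]$ computed in $I$ coincides, with its induced order, with $[<i]$ computed in $I_\nu$, so the clause transfers verbatim from the pieces.
\item \emph{Your proposed strengthening.} Assume an increasing chain in which every proper initial segment of $I$ lies in some $I_\nu$. This also works. If $S=I_\nu\neq I$, then $I_\nu$ is a proper initial segment of some strictly larger piece $I_\mu$, which is an avatar, so $S$ has at most $\kappa^+$ proper initial segments. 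However, this is a different and much more restrictive statement, and it no longer covers the disjoint unions the paper has in mind.
\end{itemize}
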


A {`}{`}theorem'' we could hence attempt to prove is:

\begin{dtheorem}\label{lem:kappa++} Suppose that GCH holds, $\kappa$ is a regular infinite cardinal, and $I$ is a 
$\kappa^{++}$-avatar.

Then the limit ${\mathbb P}_I$ of the iteration
\[
\langle ({\mathbb P}_J,  \name{{\mathbb P}}({\mathcal F}^\ast)_J):\,J \mbox{ a proper initial segment of }I\rangle
\]
made with supports of size $\le\kappa$
has the $\kappa^{++}$-cc.
\end{dtheorem}

Proving Theorem \ref{lem:kappa++} sounds easy. It should be analogous to proving that the countable support iteration of $\omega_2$ copies of Laver forcing 
over a model of CH satisfies the $\aleph_2$-cc. However, despite many citations, Richard Laver's proof of this fact in the original paper 
\cite[Lem. 10]{Laver_Borel} is incorrect, and that for obvious reasons.  Baumgartner's work on Axiom A, \cite[Lem. 7.4]{Ba} claims that over a model of CH any countable iteration of Axiom A forcing of size $\aleph_1$ and of length $\omega_2$ satisfies $\aleph_2$-cc. His proof corrects the obvious mistake from   \cite[Lem. 10]{Laver_Borel}, however, the proof is still incorrect. So are the proofs by Shelah in two editions of his book \cite[III 4.1]{zbMATH03779315} and \cite[III 4.1]{Sh_P} of what has become known as the Shelah Preservation Theorem. It is basically the same theorem and attempted proof as in Baumgartner's article, where the Axiom A property is replaced by its model-theoretic rendition, which is properness. 
This theorem is cited without proof and perhaps jokingly in \cite{GeshckeQuickert}, \cite{Jec86} and \cite{Jech-ST}
(who refers to  \cite{zbMATH05810560}) and it is mentioned under the quotation marks and with no proof in \cite{Goldsterniteration}. Finally, a correct proof was supposedly given by Uri Abraham in \cite{zbMATH05810560}. However, this proof is not correct either, as it makes a mistake in the application of the 
$\Delta$-system Lemma.

Baumgartner and Laver in \cite[Th. 5.3]{zbMATH03664937} prove that the iteration of $\omega_2+1$
perfect set forcings collapses $\aleph_2$. This and the fact that the iteration of length $\omega_2$ of perfect set forcing preserves $\aleph_1$ was also proved by Kanovei in 
\cite{zbMATH03685477}. However, in the same paper \cite[Th. 2.4]{zbMATH03664937} Baumgartner and Laver have already ``proven'' that any length iteration of perfect set forcing preserves $\aleph_1$. ``Proven'' in the sense that their proof is just a sketch (the correct proof follows from \cite{Ba}, see the footnote below). These authors give a careful proof of the
$\aleph_2$-chain condition of the CS iteration of $\omega_2$-perfect set forcings over a model of CH in \cite[\S3]{zbMATH03664937}, and moreover the fact that for any $\alpha<\omega_1$ the iteration of length $\alpha$ has a dense set of size $\aleph_1$. Their proof is specific to the perfect set forcing as it uses the notion of being ``$(F,n)$-determined'' that reposes on the fact that the first finitely many levels of a perfect subtree of ${}^{<\omega}2$ form a finite set. In particular this
proof does not apply to Laver forcing or to the Axiom A forcing in general. \footnote{It is therefore clear from reading \cite{zbMATH03664937} that at the time of writing \cite{Ba}, Baumgartner knew that what was to become the ``Shelah's Preservation Theorem'' is wrong. This, combined with the fact that \cite{Ba} does not make a reference to \cite{zbMATH03664937} seems to indicate that putting a wrong iteration theorem for Axiom A in \cite{Ba}, setting the stage for it to be copied in  \cite[III 4.1]{zbMATH03779315} was Baumgartner's way to conclusively end the difference of opinion of who invented what: Baumgartner claimed that he invented Axiom A and that Shelah copied it by dressing it as proper forcing, while Shelah claimed that the two notions were discovered independently. The other existening evidence (see \cite[\S 10]{Property-B}) combined with this legacy of Baumgartner in \cite{Ba}, shows that Baumgartner's version of the events was the correct one. In the light of this, it seems possible that the fact that \cite{zbMATH03664937} does not give a complete argument of the preservation of $\aleph_1$ was a move by both Baumgartner and Laver in the direction of the protection of intellectual rights which the dispute had brought about.} 

As a side remark, while this author did not find any difficulties in understanding the proofs in \cite{zbMATH03664937}, she is not convinced of the correctness of the claimed simplification of the proofs in \cite{zbMATH01786788}, particularly by Lemma II.7. which seems to be saying that the iteration of $\omega_2$ perfect set forcings is, in modern terminology, strongly proper. Neither is she convinced by the representation of the proof in \cite[pg. 132]{zbMATH04134031} which uses the wrong theorem \cite[Th. 7.4]{Ba} to prove the correct theorem from \cite[\S 3]{zbMATH03664937}.

As for the Laver reals, it is therefore not clear that Laver solved the Borel conjecture the way that he intended it. However, as his forcing does make all strong measure zero sets countable, which is contradictory to CH (see the proof by Tomek Bartoszy{\'n}ski and Haim Judah in \cite[\S8.3]{BartoszynskiJudah}), one can conclude that CH no longer holds in his final model and hence $\aleph_2$ is not collapsed. The Borel Conjecture is solved by Laver after all. In our case we do not have the measure-theoretic argument to help conclude that $\kappa^{++}$ is not collapsed, so the situation is not clear. 

Kanovei \cite{zbMATH03685477} and the modernised version of this work in Friedman-Victoria Gitman-Kanovei \cite{zbMATH07077420}, pointed out several difficulties in iterations of sub-forcings of the perfect set forcing.

For all these reasons, we must leave the discussion of our Attempted Theorem \ref{lem:kappa++} -- and even its linear version iterating along the ordinal $\kappa^{++}$-- to the future scholars.

\section{Concluding Remarks}\label{conclusions} Throughout this paper, we have considered an infinite cardinal $\kappa$  satisfying $\kappa^{<\kappa}=\kappa$
together with the filter ${\mathcal F}^\ast$ of its co-bounded subsets. We have shown that the forcing notion ${\mathbb P}(\mathcal F^\ast)$ can be iterated with supports of size $\le\kappa$ along
any well-founded order $I$, while preserving cardinals up to and including $\kappa^+$. 

We have also shown that ${\mathbb P}(\mathcal F^\ast)$ adds a generic function from $\kappa$ to $\kappa$ that dominates modulo a club every such function from the ground model. As an application, we have provided a proof of a result claimed by Cummings and Shelah
\cite[Th. 1]{zbMATH00819406}. The result states that the structure of ${}^\kappa \kappa$ modulo domination on a club can embed an arbitrary given
well-founded order, in an extension which preserves cardinals up to and including $\kappa^+$.

We have discussed the difficulties in preserving $\kappa^{++}$, which already arise in contexts much simpler than the one considered here.

The similarity of Axiom A with certain kinds of proper forcing allows for a translation of Theorem \ref{cor:aleph_0} into these terms. We leave this development to the interested reader. On a more challenging note, 
an encyclopaedic development of issues related to perfect set forcing and descriptive set theory appears in a recent paper by Vassily Lyubetski and
Kanovei \cite{zbMATH07327743}. It is likely that some of these considerations may be combined with what is done in this paper to give results relating to generalised perfect set forcing.

\section*{Acknowledgements} The author thanks Vladimir Kanovei for the discussion of perfect set forcing, the geometric iteration and its history. She would also like to acknowledge the clarity and precision of writing in \cite{zbMATH01335192}, thanks to which it was easy to understand that the main idea could apply to the situation treated in this paper.

The paper used the AI product GROK 4.3 by X-AI, which served as a research assistant for rough historical facts, finding references and {\LaTeX} questions. All mathematical developments are due entirely to the author unassisted by the AI. 

This research would not have been possible without the membership in the libraries of the Bibliothèque Nationale de France and Institut Henri Poincare in Paris as well as in the Association of Symbolic Logic (ASL) and Math-Net.Ru, where the latter two provided online access to their journals.

\bibliography{../bibliomaster}
\bibliographystyle{plainurl}

\end{document}